\theoremstyle{plain}
\newtheorem{thm}{Theorem}[section]
\newtheorem{prop}[thm]{Proposition}
\theoremstyle{definition}
\newtheorem{remark}[thm]{Remark}
\theoremstyle{example}
\theoremstyle{remark}
\numberwithin{equation}{section}
\providecommand{\keywords}[1]{\textbf{\textit{Key words---}} #1}
\def\cF{\mathcal{F}}
\def\cL{\mathcal{L}}
\def\cO{\mathcal{O}}
\def\cP{\mathcal{P}}
\def\CC{\mathbb{C}}
\def\KK{\mathbb{K}}
\def\RR{\mathbb{R}}
\def\ZZ{\mathbb{Z}}
\def\fa{\mathfrak{a}}
\def\fb{\mathfrak{b}}
\def\fg{\mathfrak{g}}
\def\fk{\mathfrak{k}}
\def\Card{\mathrm{Card}}
\def\Hom{\mathrm{Hom}}
\def\Ind{\mathrm{Ind}}
\def\wt{\mathrm{wt}}
\tikzstyle{V}=[draw, fill =black, circle, inner sep=0pt, minimum size=1.5pt]
\tikzstyle{wV}=[draw, fill =white, circle, inner sep=0pt, minimum size=4.5pt]
\tikzstyle{bV}=[draw, fill =black, circle, inner sep=0pt, minimum size=4.5pt]
\tikzstyle{over}=[draw=white,double=black,line width=2pt, double distance=.5pt]
\def\Over[#1,#2][#3,#4]{ 
	\draw[style=over]   (#1,#2) .. controls ++(0,#4*.5-#2*.5) and ++(0,-#4*.5+#2*.5) .. (#3,#4);}
\def\Under[#1,#2][#3,#4]{ 
	\draw  (#1,#2) .. controls ++(0,#4*.5-#2*.5) and ++(0,-#4*.5+#2*.5) .. (#3,#4);}
\def\Cross[#1,#2][#3,#4]{
	\Under[#3,#2][#1,#4]\Over[#1,#2][#3,#4]}
\def\Tops[#1][#2][#3]{
	\foreach\x in {#1}{
		\draw (\x+.1,#2) -- (\x+.1,#2+.15) (\x-.1,#2) -- (\x-.1,#2+.15) ;
		\draw (\x+.1,#2+.15) arc (0:360:1mm and .5mm);}
	\foreach \x in {1,...,#3} {\draw (\x,#2)  to (\x,#2+.05) node[V]{};}
	}
\def\Bottoms[#1][#2][#3]{
	\foreach\x in {#1}{
		\draw (\x+.1,#2) -- (\x+.1,#2-.1) (\x-.1,#2) -- (\x-.1,#2-.1) ;
		\draw (\x+.1,#2-.1) arc (0:-180:1mm and .5mm);}
	\foreach \x in {1,...,#3} {\draw (\x,#2)  to (\x,#2-.05) node[V]{};}
	}
\def\Caps[#1][#2,#3][#4]{
	\Tops[#1][#3][#4]
	\Bottoms[#1][#2][#4]
	}
\def\Pole[#1][#2,#3]{
	\shade[left color=white,right color=white] (#1+.1,#2) rectangle (#1-.1,#3);
	\draw[over] (#1+.1,#2) to (#1+.1,#3) (#1-.1,#2) to (#1-.1,#3) ;}
\def\Label[#1,#2][#3][#4]{
	\node[above] at (#3,#2+.1) {#4};
	\node[below] at (#3,#1-.1) {#4};		}
\renewcommand{\@makefnmark}{\mbox{\textsuperscript{}}}
\title{The Steinberg-Lusztig tensor product theorem, Casselman-Shalika and LLT polynomials}
\author{
Martina Lanini \ \ email:\ lanini@mat.uniroma2.it \\
Arun Ram\quad\ \ email:\ aram@unimelb.edu.au \\
\\
}
\date{}
\begin{document}

\maketitle

\begin{center}
{\sl Dedicated to Friedrich Knop and Peter Littelmann \\
 on the occasion of their 60th birthdays}
\end{center}

\begin{abstract}
\noindent
In this paper we establish a Steinberg-Lusztig tensor product theorem for
abstract Fock space.  This is a generalization of the type A result of Leclerc-Thibon
and a Grothendieck group version of the Steinberg-Lusztig tensor product theorem
for representations of quantum groups at roots of unity.  Although the statement can 
be phrased in terms of parabolic affine Kazhdan-Lusztig polynomials and thus has 
geometric content, our proof is combinatorial,
using the theory of crystals (Littelmann paths).  We derive the Casselman-Shalika formula
as a consequence of the Steinberg-Lusztig tensor product theorem for
abstract Fock space.
\end{abstract}

\keywords{quantum groups, affine Lie algebras, Hecke algebras, symmetric functions}
\footnote{AMS Subject Classifications: Primary 17B37; Secondary  20C20.}

\setcounter{section}{-1}

\section{Introduction}

In our previous paper \cite{LRS} we provided a construction of an ``abstract'' Fock space $\cF_\ell$ in a general Lie type setting.
The construction is given by simple combinatorial ``straightening relations'' which
generalize the Kashiwara-Miwa-Stern \cite{KMS} formulation of the $q$-Fock space from the
type A case. We showed that the abstract Fock space is a combinatorial realization of the 
graded Grothendieck group of finite dimensional representations of the quantum group at
a root of unity, where the standard basis elements $\vert\lambda \rangle$
correspond to the Weyl modules $\Delta_q(\lambda)$ and the KL-basis $C_\lambda$
corresponds to the simple modules $L_q(\lambda)$.

In Section 1 we prove a product theorem (Theorem \ref{TenPrdThm})
in abstract Fock space which generalizes the type A
theorem of Leclerc and Thibon \cite[Theorem 6.9]{LT}.  Our proof 
follows the same pattern as the proof for Type A given in \cite[Theorem 6.9]{LT}
except that, in order to deal with general Lie type,
we have replaced the use of ribbon tableaux 
with the crystal basis and Littelmann paths.
The basic philosophy of our technique is similar to the main idea of a paper of Guilhot \cite{Gu}
but we also make use of the elegant cancellation technique of 
Littelmann \cite[proof of Theorem 9.1]{Li} to complete the proof.  This technique provides
a combinatorial control of the Demazure operator used in the proof of \cite[Lemma 4.4]{Knp}.
We have not considered the unequal parameter case in this paper but the close relation
between our context and that of \cite{Knp} cries out for an extension of the tensor 
product theorem for abstract Fock space to unequal parameters.


The Casselman-Shalika formula is important in the representation theory of
p-adic groups (see \cite{CS}), in its relation
to the affine Hecke algebra (see for example \cite{BBF}) and in the 
geometric Langlands program (see \cite{FGV} and \cite{NP}).
In Section 2 we show that the Casselman-Shalika formula can be
derived as a special case of the Steinberg tensor product theorem for abstract Fock space.
This derivation is done by using the relationship between the abstract Fock space and
the affine Hecke algebra as detailed in \cite[Theorem 4.7]{LRS}.

In Section 3, we review the connection between the abstract Fock space and the 
representations of quantum groups at roots of unity (Theorem 3.1) and the Steinberg-Lusztig
tensor product theorem (Theorem 3.2).  The Steinberg-Lusztig tensor product theorem is the 
primary motivation for the product theorem in abstract Fock space.  Our approach does provide 
an alternative proof of the  Steinberg-Lusztig tensor product theorem for representations of 
quantum groups at  roots of unity (though hardly elementary since proving the 
Steinberg-Lusztig tensor product theorem this way relies on deep results of Kazhdan-Lusztig
\cite{KL94} 
and Kashiwara-Tanisaki \cite{KT95}).

As explained in \cite{LT}, the Steinberg-Lustzig tensor product theorem and 
the abstract Fock space are intimately related to the
LLT polynomials defined in type A by Lascoux, Leclerc and Thibon \cite{LLT}.  
Fundamentally, the LLT polynomials are taking the role of the characters of the
Frobenius twisted Weyl modules which, by the Steinberg-Lusztig tensor product theorem, are
simple modules for the quantum group at a root of unity.
General Lie type definitions
of LLT polynomials have been given by Grojnowski-Haiman \cite{GH} 
and Lecouvey \cite{Lcy}.
In the second half of Section 3, we summarize a 2008 letter 
from C.\ Lecouvey to A.\ Ram which explains 
that a consequence of the tensor product theorem for 
abstract Fock space is that the definition from \cite{GH} and the definition from \cite{Lcy} 
coincide up to a  power of $t^{\frac12}$.  

Kazhdan and Lusztig \cite{KL94} established an equivalence of categories between an appropriate 
category of representations of the affine Lie algebra (of negative level) and the finite dimensional
representations of the quantum group (of the finite dimensional Lie algebra) at a root of unity.
In Section 4 we review this correspondence and make explicit the tensor product theorem
in terms of representations of the affine Lie algebra.  This produces a character formula for certain
negative level irreducible highest weight representations of the affine Lie algebra.  From the point of
view of this paper this character formula is an easy consequence of \cite{KL94} and
\cite{Lu89}.  We find it difficult to believe that this formula has not been noticed before but we
have not yet been able to locate a suitable specific reference.

We thank all the institutions which have supported our work on this paper, 
particularly the Univ. of Melbourne,  the 
Australian Research Council (grants DP1201001942 and DP130100674) 
and  ICERM (Institute for Computational and Experimental Research in Mathematics).  
We thank Kari Vilonen and Ting Xue for generous support of a visit of Martina Lanini to 
University of Melbourne funded by their Australian Research Council grant DP150103525.

\section{A product theorem in abstract Fock space $\cF_\ell$}\label{sectionFelldefin}

Let $W_0$ be a finite Weyl group, generated by simple reflections
$s_1, \ldots, s_n$, and acting on a lattice of weights $\fa_\ZZ^*$.  
For example, this situation arises when $T$ is a maximal torus of a reductive algebraic group $G$,
\begin{equation}
\fa_\ZZ^* = \Hom(T,\CC^\times)
\qquad\hbox{and}\qquad
W_0 = N(T)/T,
\label{wtsWeylgpdefn}
\end{equation}
where $N(T)$ is the normalizer of $T$ in $G$.  The simple reflections
in $W_0$ correspond to a choice of Borel subgroup $B$ of $G$ which contains $T$.
Let $R^+$ denote the positive roots.  Let $\alpha_1, \ldots, \alpha_n$ be the simple roots and let
$\alpha_1^\vee, \ldots, \alpha_n^\vee$ be the simple coroots.
The \emph{dot action} of $W_0$ on $\fa_\ZZ^*$ is given by
\begin{equation}
w\circ\lambda = w(\lambda+\rho)-\rho,
\qquad\hbox{where}\quad \rho = \hbox{$\frac12$}\sum_{\alpha\in R^+} \alpha
\label{dotaction}
\end{equation}
is the half sum of the positive roots for $G$ (with respect to $B$).

Fix $\ell\in \ZZ_{>0}$.  The \emph{abstract Fock space} $\mathcal{F}_\ell$ is
the $\ZZ[t^{\frac12},t^{-\frac12}]$-module generated by $\{ \vert \lambda\rangle \ |\ \lambda\in \fa_{\ZZ}^{*}\}$ with relations
\begin{equation}
\vert s_i\circ\lambda\rangle=\begin{cases}
-\vert \lambda\rangle, &\hbox{if $\langle\lambda+\rho,\alpha_i^\vee\rangle \in \ell\ZZ_{\ge 0}$,} \\
-t^{\frac12}\vert \lambda\rangle, &\hbox{if $0<\langle\lambda+\rho,\alpha_i^\vee\rangle<\ell $,} \\
-t^{\frac12}\vert s_i\circ\lambda^{(1)} \rangle -\vert \lambda^{(1)} \rangle -t^{\frac12}\vert \lambda\rangle, 
&\hbox{if $ \langle\lambda+\rho,\alpha_i^\vee\rangle > \ell$ and $\langle\lambda+\rho,\alpha_i^\vee\rangle\not\in \ell\ZZ$,}
\end{cases}
\label{Fstraightening}
\end{equation}
where 
$\lambda^{(1)} = \lambda - j\alpha_i$
if $\langle\lambda+\rho, \alpha_i^\vee\rangle = k\ell + j$ with $k\in \ZZ_{>0}$ and $j\in \{1, \ldots, \ell-1\}$.

The following picture illustrates the terms in \eqref{Fstraightening}.  This is the case $G=SL_2$ with $\ell=5$, 
$\langle\omega_1,\alpha_1^\vee\rangle=1$ and $\alpha_1=2\omega_1$ and, in the picture,
$\lambda$ corresponds to the third case of \eqref{Fstraightening}, $\mu$ to the first case and $\nu$ to the second case.
$$\setlength{\unitlength}{0.5cm}
\begin{picture}(23,3)
\put(-5,2){\line(1,0){32}}

\put(-4,2){\circle*{0.3}}
\put(-3,2){\circle*{0.3}}
\put(-2,2){\circle*{0.3}}
\put(-1,2){\circle*{0.3}}
\put(0,2){\circle*{0.3}}
\put(1,2){\circle*{0.3}}
\put(2,2){\circle*{0.3}}
\put(3,2){\circle*{0.3}}
\put(4,2){\circle*{0.3}}
\put(5,2){\circle*{0.3}}
\put(6,2){\circle*{0.3}}
\put(7,2){\circle*{0.3}}
\put(8,2){\circle*{0.3}}
\put(9,2){\circle*{0.3}}
\put(10,2){\circle*{0.3}}
\put(11,2){\circle*{0.3}}
\put(12,2){\circle*{0.3}}
\put(13,2){\circle*{0.3}}
\put(14,2){\circle*{0.3}}
\put(15,2){\circle*{0.3}}
\put(16,2){\circle*{0.3}}
\put(17,2){\circle*{0.3}}
\put(18,2){\circle*{0.3}}
\put(19,2){\circle*{0.3}}
\put(20,2){\circle*{0.3}}
\put(21,2){\circle*{0.3}}
\put(22,2){\circle*{0.3}}
\put(23,2){\circle*{0.3}}
\put(24,2){\circle*{0.3}}
\put(25,2){\circle*{0.3}}
\put(26,2){\circle*{0.3}}

\put(-4.8,0.8){$\scriptstyle{-14}$}
\put(-3.8,0.8){$\scriptstyle{-13}$}
\put(-2.8,0.8){$\scriptstyle{-12}$}
\put(-1.8,0.8){$\scriptstyle{-11}$}
\put(-0.8,0.8){$\scriptstyle{-10}$}
\put(0.4,0.8){$\scriptstyle{-9}$}
\put(1.4,0.8){$\scriptstyle{-8}$}
\put(2.4,0.8){$\scriptstyle{-7}$}
\put(3.4,0.8){$\scriptstyle{-6}$}
\put(4.4,0.8){$\scriptstyle{-5}$}
\put(5.4,0.8){$\scriptstyle{-4}$}
\put(6.4,0.8){$\scriptstyle{-3}$}
\put(7.4,0.8){$\scriptstyle{-2}$}
\put(8.4,0.8){$\scriptstyle{-\rho}$}
\put(9.85,0.8){$\scriptstyle{0}$}
\put(10.85,0.8){$\scriptstyle{\omega_1}$}
\put(11.85,0.8){$\scriptstyle{2}$}
\put(12.85,0.8){$\scriptstyle{3}$}
\put(13.85,0.8){$\scriptstyle{4}$}
\put(14.85,0.8){$\scriptstyle{5}$}
\put(15.85,0.8){$\scriptstyle{6}$}
\put(16.85,0.8){$\scriptstyle{7}$}
\put(17.85,0.8){$\scriptstyle{8}$}
\put(18.85,0.8){$\scriptstyle{9}$}
\put(19.75,0.8){$\scriptstyle{10}$}
\put(20.75,0.8){$\scriptstyle{11}$}
\put(21.75,0.8){$\scriptstyle{12}$}
\put(22.75,0.8){$\scriptstyle{13}$}
\put(23.75,0.8){$\scriptstyle{14}$}
\put(24.75,0.8){$\scriptstyle{15}$}
\put(25.75,0.8){$\scriptstyle{16}$}

\put(-1,1.2){\line(0,1){1.5}}
\put(4,1.2){\line(0,1){1.5}}
\put(9,1.2){\line(0,1){1.5}}
\put(14,1.2){\line(0,1){1.5}}
\put(19,1.2){\line(0,1){1.5}}
\put(24,1.2){\line(0,1){1.5}}


\put(19.75,2.3){$\scriptstyle{\lambda}$}
\put(17.75,2.3){$\scriptstyle{\lambda^{(1)}}$}
\put(-0.8,2.3){$\scriptstyle{s_1\circ\lambda^{(1)} }$}
\put(-2.8,2.3){$\scriptstyle{s_1\circ\lambda}$}

\put(18.75,3){$\scriptstyle{\mu}$}
\put(-1.8,3){$\scriptstyle{s_1\circ\mu}$}

\put(11.75,2.3){$\scriptstyle{\nu}$}
\put(5.4,2.3){$\scriptstyle{s_1\circ\nu}$}


\end{picture}
$$
Define a $\ZZ$-linear involution 
$\overline{\phantom{T}}\colon \mathcal{F}_{\ell}\to \mathcal{F}_{\ell}$ by
\begin{equation}
\overline{t^{\frac12}} = t^{-\frac12}
\qquad\hbox{and}\qquad
\overline{\vert \lambda \rangle} = (-1)^{\ell(w_0)}(t^{-\frac12})^{\ell(w_{0})-N_\lambda}\, \vert w_0\circ \lambda \rangle.
\label{Fellbar}
\end{equation}
where $w_0$ is the longest element of $W_0$, $\ell(w_0) = \Card(R^+)$ is the length of $w_0$, and 
$N_\lambda = \Card\{\alpha\in R^+\ |\ \langle \lambda+\rho, \alpha^\vee\rangle \in \ell\ZZ\}.$

The \emph{dominant integral weights}  with the \emph{dominance partial order} $\le$ are the elements of 
\begin{equation}
\begin{array}{c}
(\fa_\ZZ^*)^+ 
= \{\lambda\in \fa_\ZZ^*\ |\ \hbox{$\langle \lambda+\rho, \alpha_i^\vee\rangle> 0$ for $i=1, 2,\ldots, n$} \}
\\
\\
\hbox{with}\qquad
\mu\le \lambda\qquad\hbox{if}\quad
\mu \in \lambda - \sum_{\alpha\in R^+} \ZZ_{\ge 0}\alpha.
\end{array}
\label{domintwtsdefn}
\end{equation}
In \cite[Theorem 1.1 and Proposition 2.1]{LRS} we showed that
$\cF_\ell$ has bases
\begin{equation}
\{ \vert\lambda\rangle\ |\ \lambda\in (\fa_\ZZ^*)^+\}
\qquad\hbox{and}\qquad
\{ C_\lambda\ |\ \lambda\in (\fa_\ZZ^*)^+\}
\label{Fellbases}
\end{equation}
where $C_\lambda$ are determined by 
\begin{equation}
\overline{C_\lambda} = C_\lambda
\qquad\hbox{and}\qquad
C_\lambda = \vert \lambda\rangle + \sum_{\mu\ne \lambda} p_{\mu\lambda} \vert\mu\rangle,
\qquad\hbox{with $p_{\mu\lambda}\in t^{\frac12}\ZZ[t^{\frac12}]$.}
\label{Clambdadefn}
\end{equation}

\subsection{The action of $\KK[X]^{W_0}$ on $\cF_\ell$}

Letting $\KK = \ZZ[t^{\frac12}, t^{-\frac12}]$, the group algebra of $\fa_\ZZ^*$ is
\begin{equation}
\KK[X] = \hbox{$\KK$-span}\{ X^\mu\ |\ \mu\in \fa_\ZZ^*\}
\quad\hbox{with}\quad
X^\mu X^\nu = X^{\mu+\nu}.
\label{KXdefn}
\end{equation}
The Weyl group $W_0$ acts $\KK$-linearly on $\KK[X]$ by
\begin{equation}
wX^\mu = X^{w\mu},\ \hbox{for $w\in W_0$ and $\mu\in \fa_\ZZ^*$,}
\quad\hbox{and}\quad 
\KK[X]^{W_0} = \{ f\in \KK[X]\ |\ wf = f\}
\end{equation}
is the ring of \emph{symmetric functions}.

Let $V$ be the free $\KK$-module generated 
by $\{ \vert \lambda\rangle \ |\ \lambda\in \fa_{\ZZ}^{*}\}$ so that
\begin{equation}
\cF_\ell \cong V/I,
\label{Fasqtt}
\end{equation}
where $I$ is the subspace of $V$  
consisting of $\KK$-linear combinations of 
the elements
$$
\begin{array}{cl}
a_\lambda = \vert s_i\circ\lambda\rangle+\vert \lambda\rangle, 
&\hbox{with $\langle\lambda+\rho,\alpha_i^\vee\rangle \in \ell\ZZ_{\ge 0}$,} \\
b_\lambda = \vert s_i\circ\lambda\rangle+t^{\frac12}\vert \lambda\rangle, 
&\hbox{with $0<\langle\lambda+\rho,\alpha_i^\vee\rangle<\ell $, and} \\
c_\lambda = \vert s_i\circ\lambda\rangle+t^{\frac12}\vert s_i\circ\lambda^{(1)} \rangle 
+\vert \lambda^{(1)} \rangle + t^{\frac12}\vert \lambda\rangle,\quad 
&\hbox{with $ \langle\lambda+\rho,\alpha_i^\vee\rangle > \ell$ and $\langle\lambda+\rho,\alpha_i^\vee\rangle\not\in \ell\ZZ$.}
\end{array}
$$

Let $\mathring{\fg}$ be the Lie algebra of the reductive group $G$ alluded to in \eqref{wtsWeylgpdefn}.
Let $\varphi$ be the highest weight of the adjoint representation and let
$\varphi^\vee\in [\mathring{\fg}_\varphi, \mathring{\fg}_{-\varphi}]$ such that
$\langle \varphi, \varphi^\vee\rangle=2$ (so that $\varphi^\vee$ is an appropriate normalized
highest short coroot of $\mathring{\fg}$).  
\begin{equation}
\hbox{The \emph{dual Coxeter number} is} \qquad
h = \langle \rho, \varphi^\vee\rangle + 1.
\label{Coxeternumber}
\end{equation}
The \emph{level $(-\ell-h)$ action} of 
$\KK[X]$ on $V$ is the $\KK$-linear extension of
\begin{equation}
X^\mu \cdot\vert \gamma\rangle = \vert -\ell w_0\mu+\gamma\rangle,
\qquad\hbox{for $\mu, \gamma\in \fa_\ZZ^*$.}
\label{Vaction}
\end{equation}
Letting $w_0$ be the longest element of $W_0$, define
$$\mu^* = -w_0\mu
\qquad\hbox{and}\qquad w^*= w_0ww_0,
\qquad\hbox{for $\mu\in \fa_\ZZ^*$ and $w\in W$.}
$$
(This notation is such that if $\mu\in \fa_\ZZ^*$ and $L_{\mathring{\fg}}(\mu)$ denotes the 
irreducible $\mathring{\fg}$-module of 
highest weight $\mu$ then the dual $L_{\mathring{\fg}}(\mu)^*\cong L_{\mathring{\fg}}(\mu^*)$ 
and ${}^*\colon W_0\to W_0$ is the involutive automorphism of $W_0$ 
induced by the automorphism of the Dynkin diagram specified by $s_i^* = s_{i^*}$.)  
Then
\begin{equation}
X^\mu \cdot\vert \gamma\rangle = \vert \ell \mu^*+\gamma\rangle
\qquad\hbox{and}\qquad
(w\mu)^* 
= w^*\mu^*.
\label{staraction}
\end{equation}

The following proposition establishes an action of the ring of symmetric functions
$\KK[X]^{W_0}$ on the abstract Fock space $\cF_\ell$.  From the point of view of
Theorem \ref{abstracttoHecke} below, this action is coming from an action of the center
of the affine Hecke algebra which, by an important result of Bernstein, is the ring
of symmetric functions (inside the affine Hecke algebra).  Our proof of 
Proposition \ref{KXaction} provides an independent proof of the existence of the action
of $\KK[X]^{W_0}$ without referring to the affine Hecke algebra and the characterization of its center.

\begin{prop} \label{KXaction}
The action of $\KK[X]$ on $V$ given in \eqref{Vaction} induces a $\KK$-linear action of
the ring $\KK[X]^{W_0}$ of symmetric functions on $\cF_\ell$  by
$$\left(\sum_{w\in W_0} X^{w\mu}\right) \cdot \vert \gamma\rangle 
= \sum_{w\in W_0} \vert \ell(w\mu)^*+\gamma\rangle,
\quad\hbox{for $\mu\in \fa_\ZZ^*$ and $\gamma\in \fa_\ZZ^*$.}
$$
\end{prop}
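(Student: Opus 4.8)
The content of the proposition is that the subspace $I\subseteq V$ is stable under every operator $v\mapsto f\cdot v$ with $f\in\KK[X]^{W_0}$, where $\KK[X]$ acts on $V$ by \eqref{Vaction}; granting this, \eqref{Vaction} descends to $\cF_\ell=V/I$ and the displayed formula is just \eqref{Vaction}--\eqref{staraction} evaluated at $f=\sum_{w\in W_0}X^{w\mu}$. Since the monomial symmetric functions $m_\nu=\sum_{\eta\in W_0\nu}X^\eta$, $\nu\in(\fa_\ZZ^*)^+$, form a $\KK$-basis of $\KK[X]^{W_0}$, it is enough to show $m_\nu\cdot I\subseteq I$ for each $\nu$; by \eqref{Vaction}--\eqref{staraction} the operator $m_\nu$ on $V$ is $\sum_{\eta\in O}\tau_\eta$, where $O=W_0\nu^*$ is a $W_0$-orbit and $\tau_\eta$ denotes the shift $|\gamma\rangle\mapsto|\ell\eta+\gamma\rangle$. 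As $I=\sum_{i=1}^nJ_i$ with $J_i$ the span of the generators $a_\lambda,b_\lambda,c_\lambda$ attached to $s_i$, I fix $i$ and prove $\bigl(\sum_{\eta\in O}\tau_\eta\bigr)(J_i)\subseteq J_i$.

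The argument rests on the two identities $s_i\circ(\mu+\ell\eta)=(s_i\circ\mu)+\ell\,s_i\eta$ and $\langle\mu+\ell\eta+\rho,\alpha_i^\vee\rangle=\langle\mu+\rho,\alpha_i^\vee\rangle+\ell\langle\eta,\alpha_i^\vee\rangle$; the latter shows $\tau_\eta$ shifts $\langle\cdot+\rho,\alpha_i^\vee\rangle$ by $\ell\langle\eta,\alpha_i^\vee\rangle$ and commutes with the replacement $\mu\mapsto\mu-j\alpha_i$ of \eqref{Fstraightening} (the residue mod $\ell$ being unchanged). Split $O$ into the set $O_0=\{\eta\in O:\langle\eta,\alpha_i^\vee\rangle=0\}$ of $s_i$-fixed weights and the two-element $\langle s_i\rangle$-orbits $\{\eta,s_i\eta\}$. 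For $\eta\in O_0$ one has $s_i\eta=\eta$, so $\tau_\eta$ commutes with $s_i\circ$ and preserves $\langle\cdot+\rho,\alpha_i^\vee\rangle$, hence carries each generator $a_\lambda,b_\lambda,c_\lambda$ to the same-type generator at $\tau_\eta\lambda$, giving $\tau_\eta(J_i)\subseteq J_i$. It remains to show $(\tau_\eta+\tau_{s_i\eta})(r)\in J_i$ for a two-element orbit $\{\eta,s_i\eta\}$, with $m:=\langle\eta,\alpha_i^\vee\rangle>0$ after possibly swapping, and each generator $r$.

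Moving $s_i\circ$ past the shifts by the first identity and regrouping, $(\tau_\eta+\tau_{s_i\eta})(r)$ becomes the sum of the expression of the same shape as $r$ based at $\tau_\eta\lambda$ and the one based at $\tau_{s_i\eta}\lambda$. For $r=a_\lambda$ these are two $a$-relations, so the sum lies in $J_i$. For $r=b_\lambda$ (parameter $j\in(0,\ell)$) the two summands combine --- using $s_i\circ(\tau_{s_i\eta}\lambda)=\tau_\eta\lambda-j\alpha_i$ --- into the single generator $c_{\tau_\eta\lambda}$ (parameter $j+\ell m>\ell$), again in $J_i$. For $r=c_\lambda$ (parameter $k\ell+j$, $k\ge1$), the summand at $\tau_\eta\lambda$ is the generator $c_{\tau_\eta\lambda}$ (parameter $(k+m)\ell+j>\ell$), and the summand at $\tau_{s_i\eta}\lambda$ is $\chi(\tau_{s_i\eta}\lambda)$, where $\chi(\mu):=|s_i\circ\mu\rangle+t^{\frac12}|s_i\circ(\mu-j\alpha_i)\rangle+|\mu-j\alpha_i\rangle+t^{\frac12}|\mu\rangle$. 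So the proof comes down to showing that $\chi(\mu)\in J_i$ for every $\mu$ with $\langle\mu+\rho,\alpha_i^\vee\rangle\notin\ell\ZZ$ --- and this is the main obstacle. It is immediate when the parameter is $>\ell$ ($\chi(\mu)=c_\mu$) or in $(0,\ell)$ ($\chi(\mu)=2b_\mu$); for the negative-parameter cases one reduces $\chi(\mu)$ to the $s_i$-dominant part of the $\alpha_i$-string through $\mu$ by a finite, telescoping application of the relations $a,b,c$, in the course of which all coefficients cancel (already the case $\langle\mu+\rho,\alpha_i^\vee\rangle=-\ell+j$ exhibits this, needing one $b$- and one $c$-straightening). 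Granting this, $\bigl(\sum_{\eta\in O}\tau_\eta\bigr)(J_i)\subseteq J_i$ for every $i$, so $\KK[X]^{W_0}$ acts on $\cF_\ell=V/I$, and \eqref{Vaction}--\eqref{staraction} with $f=\sum_{w\in W_0}X^{w\mu}$ gives the formula. (The same action can be read off from the affine Hecke algebra module structure of \cite{LRS} together with Bernstein's description of the center as $\KK[X]^{W_0}$, but the aim here is to produce it directly.)
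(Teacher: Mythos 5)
Your overall strategy coincides with the paper's: the paper likewise pairs the $W_0$-sum into $\{1,s_i^*\}$-cosets and runs the same three-case analysis on the generators $a_\lambda$, $b_\lambda$, $c_\lambda$, and your handling of the $s_i$-fixed shifts, of $a_\lambda$ and $b_\lambda$, and of the summand of $c_\lambda$ based at $\tau_\eta\lambda$ agrees with it. The problem is the step you yourself label ``the main obstacle'': showing $\chi(\mu)\in J_i$ when $\mu=\tau_{s_i\eta}\lambda$ has $\langle\mu+\rho,\alpha_i^\vee\rangle=(k-m)\ell+j$ with $k-m\le -1$. This is precisely the computational heart of the proposition, and you do not prove it --- ``a finite, telescoping application of the relations \dots in the course of which all coefficients cancel'' is an assertion, not an argument, and your calibration of the first instance (``needing one $b$- and one $c$-straightening'') indicates the computation was not actually carried out, since in fact no straightening occurs there at all. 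As written, the proof is therefore incomplete exactly where the work is.

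The gap does close, and in one line, which is what the paper's Case 3 does. Suppose $\langle\mu+\rho,\alpha_i^\vee\rangle=-p\ell+j$ with $p\ge 1$ and $0<j<\ell$, and set $\mu^{(1)}=\mu-j\alpha_i$. Then $\langle s_i\circ\mu^{(1)}+\rho,\alpha_i^\vee\rangle=p\ell+j>\ell$ and $\notin\ell\ZZ$, so $c_{s_i\circ\mu^{(1)}}$ is a genuine generator of $J_i$; moreover $s_i\circ(s_i\circ\mu^{(1)})=\mu^{(1)}$, $(s_i\circ\mu^{(1)})^{(1)}=s_i\circ\mu^{(1)}-j\alpha_i=s_i\circ\mu$ and $s_i\circ(s_i\circ\mu)=\mu$, so the four terms of $c_{s_i\circ\mu^{(1)}}$ are term-for-term the four terms of $\chi(\mu)$. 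Hence $\chi(\mu)=c_{s_i\circ\mu^{(1)}}$ on the nose: the expression is a generator after reindexing, with no telescoping or cancellation. (This is the term the paper records as $c_{s_i\circ\lambda^{(1)}+\ell v^*\mu^*}$ in the subcase $\langle\ell v^*\mu^*,\alpha_i^\vee\rangle>\ell k$; the analogous reindexing $a_\nu=a_{s_i\circ\nu}$ is also what makes your $a_\lambda$ case harmless when $\tau_{s_i\eta}\lambda$ has negative pairing, a point you pass over silently.) With this identity inserted, your argument becomes a complete proof essentially identical to the paper's.
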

\begin{proof}
Let $f$ be an element of the subspace $I$ defined in \eqref{Fasqtt}, let $\mu\in \fa_\ZZ^*$
and let $i\in \{1, \ldots, n\}$.  Summing over a set of representatives of the cosets in $\{1, s_i^*\}\backslash W_0$,
$$\left(\sum_{w\in W_0} X^{w\mu}\right) \cdot f
= \left( \sum_{v\in \{1,s_i^*\}\backslash W_0} (X^{v\mu}+X^{s_i^*v\mu})\right)\cdot f,$$
where the representatives $v\in \{1, s_i^*\}\backslash W_0$ are chosen such that
$\langle v\mu, \alpha_{i^*}^\vee\rangle \in \ZZ_{\ge 0}$.

\smallskip\noindent
Case 1:  $f = \vert s_i\circ\lambda\rangle 
+ \vert \lambda\rangle$ with $\langle \lambda+\rho, \alpha_i^\vee\rangle \in \ell\ZZ_{\geq 0}$.  Then
\begin{align*}
(X^{s_i^*v\mu}&+X^{v\mu})\cdot (\vert s_i\circ\lambda\rangle 
+ \vert \lambda\rangle)\\
&= 
\vert \ell (s_i^*v\mu)^* + s_i\circ\lambda\rangle 
+\vert \ell (v\mu)^* + s_i\circ\lambda\rangle 
+ \vert \ell (s_i^*v\mu)^* + \lambda\rangle
+ \vert \ell (v\mu)^* + \lambda\rangle \\
&= 
\vert \ell s_iv^*\mu^* + s_i\circ\lambda\rangle 
+\vert \ell v^*\mu^* + s_i\circ\lambda\rangle 
+ \vert \ell s_iv^*\mu^* + \lambda\rangle
+ \vert \ell v^*\mu^* + \lambda\rangle \\
&=
\vert s_i\circ( \ell v^*\mu^* + \lambda)\rangle 
+\vert s_i\circ( \ell s_iv^*\mu^* + \lambda)\rangle 
+ \vert \ell s_iv^*\mu^* + \lambda\rangle
+ \vert \ell v^*\mu^* + \lambda\rangle \\
&=  \begin{cases}
a_{\ell v^*\mu^*+s_i\circ \lambda}+a_{\ell v^*\mu^*+\lambda}, 
&\hbox{if $\langle \ell v^*\mu^*, \alpha_i^\vee\rangle > \langle \lambda+\rho, \alpha_i^\vee\rangle$,} \\
a_{\ell s_iv^*\mu^* + \lambda}+a_{\ell v^*\mu^*+\lambda}, 
&\hbox{if $\langle \ell v^*\mu^*, \alpha_i^\vee\rangle \le \langle\lambda+\rho, \alpha_i^\vee\rangle$. }
\end{cases}
\end{align*}
Thus the right hand side is an element of $I$.

\smallskip\noindent
Case 2: $f = \vert s_i\circ\lambda\rangle 
+ t^{\frac12}\vert \lambda\rangle$ with $0<\langle \lambda+\rho, \alpha_i^\vee\rangle < \ell$.
Then
$\ell v^*\mu^*+s_i\circ\lambda = (\ell v^*\mu^*+\lambda)^{(1)}$ so that
\begin{align*}
(X^{s_i^*v\mu}&+X^{v\mu})\cdot (\vert s_i\circ\lambda\rangle 
+ t^{\frac12}\vert \lambda\rangle)\\
&= 
\vert \ell (s_i^* v\mu)^* + s_i\circ\lambda\rangle 
+\vert \ell (v\mu)^* + s_i\circ\lambda\rangle 
+ t^{\frac12}\vert \ell (s_i^* v\mu)^* + \lambda\rangle
+ t^{\frac12}\vert \ell (v\mu)^* + \lambda\rangle \\
&= 
\vert \ell s_i v^*\mu^* + s_i\circ\lambda\rangle 
+\vert \ell v^*\mu^* + s_i\circ\lambda\rangle 
+ t^{\frac12}\vert \ell s_i v^*\mu^* + \lambda\rangle
+ t^{\frac12}\vert \ell v^*\mu^* + \lambda\rangle \\
&= 
\vert s_i\circ ( \ell v^*\mu^* + \lambda) \rangle 
+\vert ( \ell v^*\mu^* + \lambda)^{(1)}\rangle 
+ t^{\frac12}\vert s_i\circ( \ell v^*\mu^* + s_i\circ \lambda)\rangle
+ t^{\frac12}\vert \ell v^*\mu^* + \lambda\rangle \\
&= 
\vert s_i\circ ( \ell v^*\mu^* + \lambda) \rangle 
+ t^{\frac12}\vert s_i\circ( \ell v^*\mu^* + \lambda)^{(1)} \rangle
+\vert ( \ell v^*\mu^* + \lambda)^{(1)}\rangle 
+ t^{\frac12}\vert \ell v^*\mu^* + \lambda\rangle \\
&= \begin{cases}
c_{\ell v^*\mu^*+\lambda}, &\hbox{if $\langle v^*\mu^*, \alpha_i^\vee\rangle\in \ZZ_{>0}$, } \\
2b_{\ell v^*\mu^* + \lambda}, &\hbox{if $\langle v^*\mu^*, \alpha_i^\vee\rangle =0$,}
\end{cases}
\end{align*}
since if $s_i^*v\mu\ne v\mu$ then $s_iv^*\mu^*\ne v^*\mu^*$ and 
$\langle v^*\mu^*, \alpha_i^\vee\rangle\in \ZZ_{>0}$ then
$\langle \ell v^*\mu^*+\lambda+\rho, \alpha_i^\vee\rangle > \ell$ and
$\langle \ell v^*\mu^*+\lambda+\rho, \alpha_i^\vee\rangle \not\in \ell\ZZ$.
Thus the right hand side is an element of $I$.

\smallskip\noindent
Case 3:  
Assume $\lambda\in \fa_\ZZ^*$ with $ \langle\lambda+\rho,\alpha_i^\vee\rangle > \ell$ and $\langle\lambda+\rho,\alpha_i^\vee\rangle\not\in \ell\ZZ$.
If $\mu\in \fa_\ZZ^*$ and $\langle \nu, \alpha_i^*\rangle\in \ZZ_{\ge 0}$ then
$$s_i\circ (v^*\mu^*+\nu) = s_i(v^*\mu^*+\nu+\rho)-\rho = s_iv^*\mu^* + s_i\circ\nu
\quad\hbox{and}\quad
(\ell \nu^* + \lambda)^{(1)} = \ell \nu^* + \lambda^{(1)},$$
so that, with $\langle \lambda+\rho, \alpha_i^\vee\rangle = k\ell+j$ with $k>0$ and $0\le j<\ell$,
\begin{align*}
(&X^{s_i^*v\mu}+X^{v\mu})\cdot (\vert s_i\circ\lambda\rangle 
+t^{\frac12}\vert s_i\circ \lambda^{(1)}\rangle +\vert \lambda^{(1)}\rangle 
+ t^{\frac12}\vert \lambda\rangle)\\
&= 
\vert \ell (s_i^* v\mu)^* + s_i\circ\lambda\rangle 
+\vert \ell (v\mu)^* + s_i\circ\lambda\rangle 
+t^{\frac12}\vert \ell (s_i^*v\mu)^*+ s_i\circ \lambda^{(1)} \rangle
+t^{\frac12}\vert \ell (v\mu)^*+ s_i\circ \lambda^{(1)} \rangle \\
&\qquad +\vert \ell (s_i^*v\mu)^*+ \lambda^{(1)} \rangle
+\vert \ell (v\mu)^* + \lambda^{(1)} \rangle
+ t^{\frac12}\vert \ell (s_i^* v\mu)^* + \lambda\rangle
+ t^{\frac12}\vert \ell(v\mu)^* + \lambda\rangle \\
&= 
\vert \ell s_iv^*\mu^* + s_i\circ\lambda\rangle 
+\vert \ell v^*\mu^* + s_i\circ\lambda\rangle 
+t^{\frac12}\vert \ell s_i v^*\mu^*+ s_i\circ \lambda^{(1)} \rangle
+t^{\frac12}\vert \ell v^*\mu^*+ s_i\circ \lambda^{(1)} \rangle \\
&\qquad +\vert \ell s_i v^*\mu^*+ \lambda^{(1)} \rangle
+\vert \ell v^*\mu^* + \lambda^{(1)} \rangle
+ t^{\frac12}\vert \ell s_i v^*\mu^* + \lambda\rangle
+ t^{\frac12}\vert \ell v^*\mu^* + \lambda\rangle \\
&= 
\vert s_i\circ(\ell v^*\mu^* + \lambda) \rangle 
+\vert s_i\circ( \ell s_i v^*\mu^* + \lambda)\rangle 
+t^{\frac12}\vert s_i\circ (\ell v^*\mu^* +\lambda^{(1)}) \rangle
+t^{\frac12}\vert s_i\circ(\ell s_i v^*\mu^* + \lambda^{(1)}) \rangle \\
&\qquad +\vert \ell s_i v^*\mu^* + \lambda^{(1)} \rangle
+\vert (\ell v^*\mu^* + \lambda)^{(1)} \rangle
+ t^{\frac12}\vert \ell s_i v^*\mu^* + \lambda\rangle
+ t^{\frac12}\vert \ell v^*\mu^* + \lambda\rangle \\
&=\begin{cases}
c_{\lambda+ \ell v^* \mu^*}+c_{s_i\circ \lambda^{(1)}+v^*\mu^*},
&\hbox{if $\langle \ell v^*\mu^*, \alpha_i^\vee\rangle > \ell k>0$,}
\\
c_{\lambda+ \ell v^* \mu^*}+c_{\lambda+ s_i v^*\mu^*},
&\hbox{if $0<\langle \ell v^*\mu^*, \alpha_i^\vee\rangle < \ell k$,}
\\
c_{\lambda+ \ell v^* \mu^*}
+b_{\lambda+ s_i v^*\mu^*}
+b_{s_i\circ \lambda^{(1)}+v^*\mu^*},
&\hbox{if $\langle \ell v^*\mu^*, \alpha_i^\vee\rangle = \ell k$.}
\end{cases}
\end{align*}
Thus the right hand side is an element of $I$.

These computations show that $I$ is stable under the action of $\KK[X]^{W_0}$.
Thus the action of $\KK[X]^{W_0}$ on $\cF_\ell = V/I$ is well defined.
\end{proof}

 \begin{remark}  One might be tempted to try to define an action of
 $\KK[X]$ on $\cF_\ell$ by $X^\mu\cdot \vert \gamma \rangle = \vert \gamma+\ell\mu\rangle$
 for $\mu, \gamma\in \fa_\ZZ^*$ but this action is not well defined.  For example in the
 $G=SL_2$ case with $\ell=5$ pictured after \eqref{Fstraightening}, one would have
 $0 = X^{\omega_1}\cdot \vert -1\rangle = \vert 5-1\rangle = \vert 4\rangle$, 
 which is a contradiction to \eqref{Fellbases}.  
 On the other hand $0=(X^{-\omega_1}+X^{\omega_1})\cdot \vert -1\rangle
 = \vert -5-1\rangle + \vert 4\rangle = 0$, as it should be.
 \end{remark}

\subsection{The product theorem}

 Let $\mathring{\fg}$ be the Lie algebra of the reductive group $G$ alluded to in \eqref{wtsWeylgpdefn}.
For $\lambda\in (\fa_\ZZ^*)^+$ let $L_{\mathring{\fg}}(\lambda)$ be the irreducible
$U\mathring{\fg}$-module of highest weight $\lambda$ and let $B(\lambda)$ be the 
crystal of $L_{\mathring{\fg}}(\lambda)$,
$$B(\lambda) = \{ \hbox{LS paths $p$ of type $\lambda$}\}
\quad\hbox{and}\quad
\hbox{$\mathrm{wt}(p)$ denotes the endpoint of $p$,}
$$
see \cite[\S5]{Ra}. The \emph{Weyl character} corresponding to $\lambda$ is 
the element of $\KK[X]^{W_0}$ given by
\begin{equation}
s_{\lambda} = \mathrm{char}(L_{\mathring{\fg}}(\lambda))
=\frac{\displaystyle{\sum_{w\in W_0} \det(w) X^{w\circ \lambda} } }
{\displaystyle{ \sum_{w\in W_0} \det(w) X^{w\circ 0} } }
= \sum_{p \in B(\lambda)} X^{\mathrm{wt}(p)}.
\label{Weylchdefn}
\end{equation}

An \emph{$\ell$-restricted} dominant integral weight is $\lambda_0\in (\fa_\ZZ^*)^+$
such that $\langle \lambda_0,\alpha_i^\vee\rangle < \ell$ for $i\in \{1, \ldots, n\}$.
In other words, if $\omega_1, \ldots, \omega_n$ are the fundamental weights for 
$\mathring{\fg}$ then
a weight $\lambda_0\in (\fa_\ZZ^*)^+$ is $\ell$-restricted if $\lambda_0$ is an 
element of 
\begin{equation}
\Pi_\ell = \{ a_1\omega_1+\cdots+a_n\omega_n\ |\ a_1, \ldots, a_n\in \{0,1,\ldots, \ell-1\} \}.
\label{Pielldefn}
\end{equation}

\begin{thm} \label{TenPrdThm}
Let $\lambda\in (\fa_\ZZ^*)^+$ be a dominant integral weight and write
$$\lambda = \ell \lambda_1 + \lambda_0,
\qquad\hbox{with $\lambda_0\in \Pi_\ell$ and $\lambda_1\in (\fa_\ZZ^*)^+$.}
$$
Then, with $C_\lambda\in \cF_\ell$ as in \eqref{Clambdadefn} 
and the $\KK[X]^{W_0}$-action on $\cF_\ell$ as in Proposition \ref{KXaction},
$$C_\lambda = s_{\lambda_1^*} \cdot C_{\lambda_0}.$$
\end{thm}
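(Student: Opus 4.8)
The plan is to prove the identity $C_\lambda = s_{\lambda_1^*}\cdot C_{\lambda_0}$ by showing that the right-hand side satisfies the two defining properties of $C_\lambda$ in \eqref{Clambdadefn}: that it is bar-invariant, and that it equals $|\lambda\rangle$ plus a $t^{\frac12}\ZZ[t^{\frac12}]$-combination of strictly lower standard basis vectors. First I would establish bar-invariance: since $\overline{C_{\lambda_0}}=C_{\lambda_0}$ and $s_{\lambda_1^*}\in\ZZ[X]^{W_0}$ has coefficients in $\ZZ$ (no powers of $t$), it suffices to check that the $\KK[X]^{W_0}$-action commutes with the bar involution on $\cF_\ell$, i.e. $\overline{f\cdot x} = \bar f\cdot \bar x$ for $f\in\KK[X]^{W_0}$. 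Using the explicit formula for $\overline{|\lambda\rangle}$ in \eqref{Fellbar} and the level $(-\ell-h)$ action \eqref{Vaction}–\eqref{staraction}, one computes how $\overline{X^{w\mu}\cdot|\gamma\rangle}$ compares to $X^{w_0 w\mu\,\text{-type shift}}\cdot\overline{|\gamma\rangle}$; the point is that $N_{\ell\mu^*+\gamma} = N_\gamma$ because adding $\ell\mu^*$ does not change which $\langle\,\cdot+\rho,\alpha^\vee\rangle$ lie in $\ell\ZZ$, and the sign and length bookkeeping works out once one sums over $w\in W_0$. This should be a short verification.

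The substantive part is the triangularity statement. Here I would follow the Leclerc–Thibon pattern as the introduction promises. Write $C_{\lambda_0} = |\lambda_0\rangle + \sum_{\mu_0<\lambda_0} p_{\mu_0\lambda_0}|\mu_0\rangle$. Applying $s_{\lambda_1^*} = \sum_{p\in B(\lambda_1^*)} X^{\mathrm{wt}(p)}$ via Proposition \ref{KXaction}, we get $s_{\lambda_1^*}\cdot C_{\lambda_0}$ as a sum over crystal paths $p$ and over $\mu_0\le\lambda_0$ of $p_{\mu_0\lambda_0}$ times $|\ell\,(\mathrm{wt}\,p)^* + \mu_0\rangle$ (after reorganizing the $W_0$-orbit sums). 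The weights $\ell\,(\mathrm{wt}\,p)^* + \mu_0$ are generally not dominant, so the bulk of the work is to \emph{straighten} each such $|\ell\nu^*+\mu_0\rangle$ using \eqref{Fstraightening} and track the resulting dominant weights and their $t$-powers. The key structural fact to isolate is: when $\nu = \mathrm{wt}(p)$ ranges over $B(\lambda_1^*)$, the leading (top, i.e. associated to the highest weight path) term produces exactly $|\ell\lambda_1 + \lambda_0\rangle = |\lambda\rangle$ with coefficient $1$; all other terms, after straightening, lie strictly below $\lambda$ in dominance order and carry coefficients in $t^{\frac12}\ZZ[t^{\frac12}]$.

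The main obstacle — and where I expect to invest the real effort — is controlling the straightening: a priori, straightening $|\ell\nu^*+\mu_0\rangle$ for non-dominant $\nu$ can produce many terms, some potentially with coefficient $1$ (from case one of \eqref{Fstraightening}, which has no $t^{\frac12}$) or even with weights that are \emph{not} strictly below $\lambda$, threatening both the triangularity and the integrality of the $p$-coefficients. The resolution, as flagged in the introduction, is Littelmann's cancellation argument from \cite[proof of Theorem 9.1]{Li}: the dangerous terms coming from different crystal paths $p$ cancel in pairs, reflecting the way the Demazure operator (equivalently, summing $X^{w\mu}$ over a parabolic coset $\{1,s_i^*\}$, exactly as in the proof of Proposition \ref{KXaction}) annihilates the ``non-dominant excess.'' Concretely, I would fix a simple reflection $s_i$, pair up paths $p$ in $B(\lambda_1^*)$ whose weights are related by the $i$-string structure (the $\tilde e_i,\tilde f_i$ operators), and show that for such a pair the non-triangular or non-integral contributions to $s_{\lambda_1^*}\cdot C_{\lambda_0}$ cancel, using the relation $c_\lambda\in I$ together with the $b_\lambda, a_\lambda$ relations. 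Iterating over a reduced word handles all of $B(\lambda_1^*)$. Once the cancellation is in place, the surviving terms are manifestly $|\lambda\rangle$ plus lower terms with coefficients in $t^{\frac12}\ZZ[t^{\frac12}]$, and combined with bar-invariance this forces $s_{\lambda_1^*}\cdot C_{\lambda_0} = C_\lambda$ by the uniqueness in \eqref{Clambdadefn}.
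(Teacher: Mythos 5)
Your two-step outline (bar invariance, then triangularity of the form $|\lambda\rangle$ plus $t^{\frac12}\ZZ[t^{\frac12}]$-lower terms, followed by uniqueness from \eqref{Clambdadefn}) matches the paper exactly, and your bar-invariance argument is essentially the paper's: the crucial facts are $N_{\gamma-\ell w_0\mu}=N_\gamma$ and that $s_{\lambda_1^*}$ is $W_0$-invariant, yielding $\overline{X^\mu\cdot|\gamma\rangle}=X^{w_0\mu}\cdot\overline{|\gamma\rangle}$.

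Where your description of the triangularity step is imprecise enough to count as a real gap is the cancellation mechanism. You correctly sense that Littelmann's argument from \cite[proof of Theorem 9.1]{Li} is the engine, and you correctly worry about the $a_\lambda$-type relations producing $-1$ coefficients that survive. But you are missing the two devices that actually make the computation tractable. First, the paper works \emph{mod $t^{\frac12}$} from the very start, which simultaneously kills every term coming from $p_{\mu_0\lambda_0}|\mu_0\rangle$ with $\mu_0\ne\lambda_0$ (so you never have to straighten those at all) and collapses all three cases of \eqref{Fstraightening} into the single clean rule
$$|\lambda_0+\ell\nu\rangle \equiv -\,|\lambda_0+\ell(s_i\circ\nu)\rangle \pmod{t^{\frac12}},$$
valid for all $\nu$; proving this one congruence (using that $\lambda_0\in\Pi_\ell$ forces $0\le \ell-1-\langle\lambda_0,\alpha_i^\vee\rangle<\ell$) is the real technical content of part (b). Your plan to ``track the resulting dominant weights and their $t$-powers'' instead is much heavier and not what the paper does. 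Second, the pairing on $B(\lambda_1^*)\setminus\{p^+_{\lambda_1^*}\}$ is \emph{not} an iteration over a reduced word: it is a single global involution $\iota$ defined by sending $p$ to the unique element of the $r$-string through $p$ with weight $s_r\circ\mathrm{wt}(p)$, where $\alpha_r$ is the \emph{first} wall $p$ crosses leaving the dominant chamber. The well-definedness and involutivity of this map are exactly the content borrowed from \cite[proof of Theorem 9.1]{Li} and \cite[proof of Theorem 5.5]{Ra}. An iteration along a reduced word (the Demazure-operator picture from the proof of Proposition \ref{KXaction}) treats one $s_i$ at a time and does not by itself produce a fixed-point-free pairing of the whole crystal minus its highest weight vertex; as described, that step would not close.
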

\begin{proof}
The proof is accomplished in two steps:
\begin{enumerate}
\item[(a)] Show that $s_{\lambda_1^*} \cdot C_{\lambda_0}$ is bar invariant.
\item[(b)] Show that $s_{\lambda_1^*} \cdot C_{\lambda_0 }
= \vert \lambda\rangle + \sum_{\mu\ne \lambda} c_{\mu} \vert\mu\rangle$
with $c_{\mu}\in t^{\frac12}\ZZ[t^{\frac12}]$.
\end{enumerate}
Proof of (a):  The bar involution and $N_\gamma$ are defined in \eqref{Fellbar}.
Since $\langle -\ell w_0\mu, \alpha^\vee\rangle\in \ell\ZZ$ then
\begin{align*}
N_{\gamma-\ell w_0\mu} 
= \Card&\{\alpha\in R^+\ |\ \langle \gamma-\ell w_0\mu+\rho, \alpha^\vee\rangle \in \ell\ZZ\} \\
&= \Card\{\alpha\in R^+\ |\ \langle \gamma+\rho, \alpha^\vee\rangle \in \ell\ZZ\} 
=N_{\gamma}.
\end{align*}
Thus
\begin{align*}
\overline{X^\mu\cdot \vert \gamma\rangle }
&= \overline{\vert \gamma -\ell w_0\mu \rangle } 
= (-1)^{\ell(w_0)}
(t^{-\frac12})^{\ell(w_0)-N_{\gamma-\ell w_0\mu}} 
\vert w_0\circ(\gamma-\ell w_0\mu)\rangle \\
&= (-1)^{\ell(w_0)}
(t^{-\frac12})^{\ell(w_0)-N_{\gamma-\ell w_0\mu}} 
\vert w_0(\gamma+\rho)-\rho-\ell\mu \rangle \\
&= (-1)^{\ell(w_0)}
(t^{-\frac12})^{\ell(w_0)-N_{\gamma-\ell w_0\mu}} 
\vert w_0\circ \gamma-\ell\mu \rangle \\
&= (-1)^{\ell(w_0)}
(t^{-\frac12})^{\ell(w_0)-N_{\gamma}} 
\vert w_0\circ \gamma-\ell \mu \rangle \\
&= (-1)^{\ell(w_0)}
(t^{-\frac12})^{\ell(w_0)-N_{\gamma}} 
X^{w_0\mu}\cdot \vert w_0\circ \gamma\rangle 
= X^{w_0\mu}\cdot \overline{ \vert \gamma\rangle},
\end{align*}
and since $s_{\lambda_1^*}$ is $W_0$-invariant,
$$\overline{ s_{\lambda_1^*} \cdot C_{\lambda_0} }
= (w_0 s_{\lambda_1^*})\cdot \overline{C_{\lambda_0}}
=s_{\lambda_1^*} \cdot C_{\lambda_0}.$$

\smallskip\noindent
(b)
Let $\lambda=\lambda_0+\ell\lambda_1$ as in the statement of the Theorem and let
$a \equiv b$ mean $a=b \bmod t^{\frac12}$.
By the second formula in \eqref{Clambdadefn},
\begin{equation}
s_{\lambda_1^*}\cdot C_{\lambda_0} \equiv s_{\lambda_1^*}\cdot \vert \lambda_0 \rangle 
= \sum_{p\in B(\lambda_1^*)} X^{\mathrm{wt}(p)}  \vert \lambda_0 \rangle
=\sum_{p\in B(\lambda_1^*)} \vert \ell\mathrm{wt}(p)^*+\lambda_0 \rangle. 
\label{SCexpansion}
\end{equation}
By \eqref{Fstraightening},
if $\lambda\in\mathfrak{a}_{\mathbb{Z}}^*$ and $\langle\lambda+\rho,\alpha_i^\vee\rangle\geq 0$ 
then
$$
\vert s_i\circ\lambda \rangle
\equiv
\begin{cases}
- \vert \lambda \rangle, 
&\hbox{if $\langle\lambda+\rho,\alpha_i^\vee\rangle \in \ell\ZZ_{\geq 0}$,}\\
0, &\hbox{if $0 < \langle \lambda+\rho, \alpha_i^\vee \rangle < \ell$,} \\
- \vert \lambda^{(1)} \rangle, &\hbox{otherwise,}
\end{cases}
$$
where $\lambda^{(1)}=\lambda-j\alpha_i$ if $\langle\lambda+\rho,\alpha_i^\vee\rangle=k\ell+j$ with $j\in\{0,1,\ldots, \ell-1\}$.  Since $\lambda^{(1)} = \lambda$ if $\langle \lambda+\rho, \alpha_i^\vee\rangle\in \ell\ZZ_{\ge 0}$, the first case can be viewed as a special case of the last case
to read
$$
\vert s_i\circ\lambda \rangle
\equiv
\begin{cases}
0, &\hbox{if $0 < \langle \lambda+\rho, \alpha_i^\vee \rangle < \ell$,} \\
- \vert \lambda^{(1)} \rangle, &\hbox{otherwise.}
\end{cases}
$$

Assume $\langle \nu+\rho, \alpha_i^\vee\rangle\in \ZZ_{\le0}$ and let
$\lambda = s_i\circ(\lambda_0 + \ell \nu)$. 
Since $\langle \rho, \alpha_i^\vee\rangle = 1$ then
\begin{align*}
\langle \lambda+\rho, \alpha_i^\vee\rangle
&= \langle s_i\circ(\lambda_0+\ell\nu)+\rho, \alpha_i^\vee\rangle
=\langle s_i(\lambda_0+\ell\nu+\rho), \alpha_i^\vee\rangle 
=\langle \lambda_0+\ell\nu+\rho, s_i\alpha_i^\vee\rangle \\
&= - \langle \lambda_0+\ell\nu+\rho, \alpha_i^\vee\rangle 
= \ell(-\langle \nu+\rho, \alpha_i^\vee\rangle) + (\ell -1 - \langle \lambda_0, \alpha_i^\vee\rangle).
\end{align*}
Since $\lambda_0\in \Pi_\ell$ then $0\le \ell-1-\langle \lambda_0,\alpha_i^\vee\rangle <\ell$
and so
\begin{align*}
\lambda^{(1)} 
&= \lambda 
- (\ell-1-\langle \lambda_0, \alpha_i^\vee\rangle)\alpha_i  
= s_i\circ(\lambda_0 + \ell \nu)
- (\ell-1-\langle \lambda_0, \alpha_i^\vee\rangle)\alpha_i \\
&= s_i\lambda_0 + \ell s_i\nu+s_i\rho-\rho
- (\ell-1-\langle \lambda_0, \alpha_i^\vee\rangle)\alpha_i \\
&= (\lambda_0 - \langle \lambda_0, \alpha_i^\vee\rangle \alpha_i)
+\ell(s_i\nu+s_i\rho-\rho)+(\ell-1)\alpha_i 
- (\ell-1-\langle \lambda_0, \alpha_i^\vee\rangle)\alpha_i \\
&= \lambda_0 + \ell(s_i\circ \nu).
\end{align*}
Thus, since $s_i\circ\lambda = \lambda_0+\ell\nu$,
$$
\vert \lambda_0+\ell\nu \rangle
\equiv
\begin{cases}
0, &\hbox{if $\langle \nu+\rho, \alpha_i^\vee\rangle = 0$,} \\
- \vert \lambda_0+\ell(s_i\circ \nu)\rangle, 
&\hbox{if $\langle \nu+\rho, \alpha_i^\vee\rangle < 0$.} \\
\end{cases}
$$
Since $s_i\circ \nu = \nu$ when $\langle \nu+\rho, \alpha_i^\vee\rangle=0$, then
$\vert \lambda_0+\ell\nu \rangle
\equiv
- \vert \lambda_0+\ell (s_i\circ \nu)\rangle$ when
$\langle \nu+\rho, \alpha_i^\vee \rangle\in \ZZ_{\le 0}$ and,
replacing $\nu$ by $s_i\circ \nu$, gives 
$\vert \lambda_0+\ell\nu \rangle\equiv
- \vert \lambda_0+\ell(s_i\circ \nu)\rangle$ 
for $\langle \nu+\rho, \alpha_i^\vee \rangle\in \ZZ_{\ge 0}$. 
Thus
\begin{equation}
\vert \lambda_0+\ell\nu \rangle
\equiv
- \vert \lambda_0+\ell(s_i\circ \nu)\rangle,
\qquad\hbox{for $\nu\in \fa_\ZZ$.}
\label{modtstraightening}
\end{equation}

With formula \eqref{modtstraightening} established, follow 
\cite[proof of Theorem 5.5]{Ra} (see also \cite[proof of Theorem 9.1]{Li})
to define an involution $\iota$ on the set $B(\lambda_1^*)\setminus\{p_{\lambda_1^*}^+\}$,
where $p_{\lambda_1^*}^+$ is the unique highest weight path in $B(\lambda_1^*)$.

\medskip
Let $p\in B(\lambda_1^*)$ and $p\neq p_{\lambda_1^*}^+$. 
Since $p\neq p_{\lambda_1^*}^+$ the path $p$ crosses a wall out of the fundamental chamber
at some point during its trajectory. 
Let $r$ be such that the first time $p$ leaves the dominant chamber is by crossing the 
hyperplane $\{ x\in \fa_\RR^*\ |\ \langle x, \alpha_r^\vee\rangle = 0\}$. 
Letting $\tilde e_r$ and $\tilde f_r$ denote the root operators on $B(\lambda_1^*)$,
the $r$-string containing $p$ is 
$$S_r(p) =\{q\in B(\lambda_1^*)\ |\ 
\hbox{$q=\tilde{e}_r^kp$ or $q=\tilde{f}_r^k p$ where $k\in \ZZ_{\geq 0}$}\}.$$
  Let
$$
\hbox{$\iota(p)$ be the element of $S_r(p)$ such that 
$\mathrm{wt}(\iota(p)) = s_r\circ \mathrm{wt}(p)$.}
$$
By \eqref{modtstraightening},
$$
\vert \lambda_0+\ell\wt(p)^*\rangle
\equiv 
- \vert \lambda_0+\ell (s_r^*\circ \wt(p)^*) \rangle
= - \vert \lambda_0+\ell(s_r\circ \wt(p))^*\rangle
= - \vert \lambda_0+\ell\wt(\iota(p))^*\rangle,
$$
and so the map $\iota$ partitions the set $B(\lambda_1^*)\setminus\{p_{\lambda_1^*}^+\}$ 
into pairs $\{p, \iota(p)\}$ which cancel each other in the mod $t^{\frac{1}{2}}$
straightening of the terms of 
$s_{\lambda_1^*}\cdot C_{\lambda_0}$ in \eqref{SCexpansion}.
Thus 
$$s_{\lambda_1^*}\cdot C_{\lambda_0} \equiv 
\vert \ell(\lambda_1^*)^*+\lambda_0\rangle
=\vert \lambda_0+ \ell \lambda_1\rangle,
\qquad\hbox{which proves (b).}
$$
\end{proof}

\section{The Casselman-Shalika formula}

In order to establish the Casselman-Shalika formula it is necessary to use the connection
between the abstract Fock space $\cF_\ell$ and the affine Hecke algebra $H$.  Let us recall
this relationship from \cite{LRS}.

\subsection{The affine Hecke algebra $H$}

Keep the notation for the finite Weyl group $W_0$, the simple reflections $s_1, \ldots, s_n$ 
and the weight lattice $\fa_\ZZ^*$ as in \eqref{wtsWeylgpdefn}.  
For $i,j\in \{1, \ldots, n\}$ with $i\ne j$, let
$m_{ij}$ denote the order of $s_is_j$ in $W_0$
so that $s_i^2=1$ and $(s_is_j)^{m_{ij}}=1$ are the relations for the Coxeter presentation of $W_0$.
Let $\KK = \ZZ[t^{\frac12}, t^{-\frac12}]$.  The \emph{affine Hecke algebra} is
\begin{equation}
H = \hbox{$\KK$-span}\{X^\mu T_w\ |\ \mu\in \fa_\ZZ^*, w\in W_0\},
\label{affHeckedefn}
\end{equation}
with $\KK$-basis $\{X^\mu T_w\ |\ \mu\in \fa_\ZZ^*, w\in W_0\}$ and relations
\begin{equation}
(T_{s_i} - t^{\frac{1}{2}})(T_{s_i} + t^{-\frac{1}{2}}) =0, \qquad
\underbrace{T_{s_i}T_{s_j}T_{s_i}\ldots}_{m_{ij}\ \mathrm{factors}} 
= \underbrace{T_{s_j}T_{s_i}T_{s_j}\ldots}_{m_{ij}\ \mathrm{factors}},
\end{equation}
\begin{equation}
X^{\lambda + \mu} = X^{\lambda}X^{\mu}, \quad\hbox{and}\quad
T_{s_i}X^{\lambda} - X^{s_i\lambda}T_{s_i} = (t^{\frac{1}{2}} - t^{-\frac{1}{2}})\left(\frac{X^{\lambda} - X^{s_i\lambda}}{1 - X^{-\alpha_i}}\right),
\label{Hrels}
\end{equation}
for $i,j\in \{1,\ldots, n\}$ with $i\ne j$ and $\lambda, \mu\in \fa_\ZZ^*$.
The \emph{bar involution on $H$} is the $\ZZ$-linear automorphism 
$\overline{\phantom{T}}\colon H\to H$ given by 
\begin{equation}
\overline{t^{\frac12}} = t^{-\frac12}, \qquad \overline{T_{s_i}} = T_{s_i}^{-1},
\qquad\hbox{and}\qquad
\overline{X^\lambda} = T_{w_0}X^{w_0\lambda}T_{w_0}^{-1}.
\label{baronH}
\end{equation}
for $i=1,\ldots, n$ and
$\lambda, \mu\in  \fa_\ZZ^*$.
For $\mu\in \fa_\ZZ^*$ and $w\in W_0$ define 
\begin{equation}
X^{t_\mu w} = X^\mu (T_{w^{-1}})^{-1}
\qquad\hbox{and}\qquad
T_{t_\mu w} 
=T_x X^{\mu^+}T_{w_{\mu^+}}(T_{w^{-1}xw_{\mu^+}})^{-1},
\label{BernsteinCoxeterconversion}
\end{equation}
where $\mu^+$ is the dominant representative of $W_0\mu$, $x\in W_0$ is minimal length such that
$\mu = x\mu^+$ and $w_{\mu^+}$ is the longest element of the stabilizer $W_{\mu^+} = \mathrm{Stab}_{W_0}(\mu+)$.
Define
$$\varepsilon_0 
= (-t^{\frac12})^{\ell(w_0)}\sum_{z\in W_0} (-t^{-\frac12})^{\ell(z)} T_z
\qquad\hbox{and}\qquad
\mathbf{1}_0 
= (t^{-\frac12})^{\ell(w_\nu)}\sum_{z\in W_0} (t^{\frac12})^{\ell(z)} T_z,
$$
so that
\begin{equation}
\overline{\varepsilon_0} = \varepsilon_0,
\quad
\overline{\mathbf{1}_0} = \mathbf{1}_0,
\qquad\hbox{and}\qquad
\varepsilon_0 T_{s_i} = -t^{-\frac12}\varepsilon_0,
\quad\hbox{and}\quad
T_{s_i}\mathbf{1}_0  = t^{\frac12}\mathbf{1}_0,
\label{e0defn}
\end{equation}
for $i\in \{1, \ldots, n\}$.
The algebra $\KK[X]$ defined in \eqref{KXdefn} is a subalgebra of $H$ and, by a theorem of
Bernstein (see \cite[Theorem 1.4]{NR}), the center of $H$ is the ring of symmetric functions,
\begin{equation}
Z(H) = \KK[X]^{W_0}.
\label{centerofH}
\end{equation}

\begin{remark}
Formulas \eqref{baronH} and \eqref{BernsteinCoxeterconversion} are just a reformulation of the usual 
bar involution 
and the conversion 
between the Bernstein and Coxeter presentations of the affine Hecke algebra
(see for example \cite[Lemma 2.8 and (1.22)]{NR}).
\end{remark}

\subsection{The relation between $H$ and the abstract Fock space $\cF_\ell$}

In this subsection we follow \cite[\S4.2]{LRS}.
The \emph{affine Weyl group} is
\begin{equation}
W = \{ t_\mu w\ |\ \mu\in \fa_\ZZ^*, w\in W_0\}, \qquad\hbox{with}\qquad
t_\mu t_\nu = t_{\mu+\nu}, \quad\hbox{and}\quad
wt_\mu = t_{w\mu} w,
\label{affWeyldefn}
\end{equation}
for $\mu, \nu\in \fa_\ZZ^*$ and $w\in W_0$.
Let $\varphi^\vee$ and $h$ be as in \eqref{Coxeternumber}.
For $\ell\in \ZZ_{>0}$,
the \emph{level $(-\ell-h)$ dot action of $W$ on $\fa_\ZZ^*$} is given by
\begin{equation}
(t_\mu w)\circ\lambda = (w\circ\lambda) - \ell \mu = w(\lambda+\rho) - \rho - \ell\mu,
\label{levelelldotaction}
\end{equation}
for $\mu\in \fa_\ZZ^*$, $w\in W_0$ and $\lambda\in \fa_\ZZ^*$.  Note that this is
an extension of the dot action of $W_0$ given in \eqref{dotaction}.
Define
\begin{equation}
A_{-\ell-h} = \{ \nu\in \fa^*_\ZZ\ |\ 
\hbox{$\langle \nu,\varphi^\vee\rangle \ge -\ell-1$ and
$\langle \nu, \alpha_i^\vee\rangle \le -1$ for $i\in \{1, \ldots, n\}$}\}.
\label{ellAlcove}
\end{equation}
and
\begin{equation}
\cP^+_{-\ell-h} = \bigoplus_{\nu\in A_{-\ell-h}} \varepsilon_0 H\mathbf{p}_\nu,
\label{Heckemoduledefn}
\end{equation}
where $\varepsilon_0$ is as in \eqref{e0defn} and $\mathbf{p}_\nu$ are formal symbols 
indexed by $\nu\in A_{-\ell-h}$ satisfying
$$\overline{\mathbf{p}_\nu} = \mathbf{p}_\nu
\qquad\hbox{and}\qquad
\hbox{$T_y\mathbf{p}_\nu = (t^{\frac12})^{\ell(y)}\mathbf{p}_\nu$ for $y\in W_\nu$,}
$$
where $W_\nu = \mathrm{Stab}_W(\nu)$ is the stabilizer of $\nu$ 
under the level $(-\ell-h)$ dot action of $W$ on $\fa_\ZZ^*$.
Define a bar involution 
\begin{equation}
\overline{\phantom{T}}\colon \cP_{-\ell-h}^+\to \cP_{-\ell-h}^+
\qquad\hbox{by}\qquad
\overline{\varepsilon_0 f \mathbf{p}_\nu } = \varepsilon_0 \bar f \mathbf{p}_\nu,
\quad\hbox{for $\nu\in A_{-\ell-h}$ and $f\in H$.}
\label{barforP}
\end{equation}
For $\lambda\in \fa^*_\ZZ$ define
\begin{equation}
[X_\lambda] = [X_{w_0v\circ\nu}]
= \varepsilon_0 X^v \mathbf{p}_\nu,
\qquad\hbox{where}\quad
\lambda = w_0v\circ\nu \quad\hbox{with $\nu\in A_{-\ell-h}$,}
\label{bracketTXdefn}
\end{equation}
and $v\in W$ is such that $X^{vu} = X^{v}T_{u}$ for any $u\in W_{\nu}$.  It is helpful
to stress that the $(-\ell-h)$ dot action of \eqref{levelelldotaction} applies here so that, when
$v = t_\mu w$ with $\mu\in \fa_\ZZ^*$ and $w\in W_0$, 
then $\lambda = -\ell w_0\mu+(w_0w)\circ \nu$ and
\begin{equation}
[X_\lambda] = [X_{-\ell w_0\mu+(w_0w)\circ \nu}] = \varepsilon_0 X^{t_\mu w}\mathbf{p}_\nu
= \varepsilon_0 X^\mu(T_{w^{-1}})^{-1}\mathbf{p}_\nu.
\label{bracketXexpanded}
\end{equation}

With these notations, a main result of \cite{LRS} is 
\begin{thm} (see \cite[Theorem 4.7]{LRS}) \label{abstracttoHecke} 
Let $\le$ be the dominance order on the set $(\fa_\ZZ^*)^+$ of
dominant integral weights.
Then the $\KK$-linear map $\Phi\colon \cF_\ell\to \cP^+_{-\ell-h}$ given by
\begin{equation}
\Phi(\ \vert\lambda\rangle\ ) = [X_\lambda],
\qquad \hbox{for $\lambda\in \fa_\ZZ^*$},
\label{Phidefn}
\end{equation}
is a well defined $\KK$-module isomorphism satisfying 
$\overline{\Phi(f)} = \Phi(\overline{f})$.
\end{thm}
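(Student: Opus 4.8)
The plan is to verify three things in sequence: (i) that $\Phi$ is well defined, i.e.\ that the map $V\to \cP^+_{-\ell-h}$ sending $\vert\lambda\rangle\mapsto [X_\lambda]$ kills the subspace $I$ of straightening relations from \eqref{Fasqtt}; (ii) that $\Phi$ is a $\KK$-module isomorphism; and (iii) that $\Phi$ intertwines the two bar involutions. For (i), the key observation is that $[X_\lambda]=\varepsilon_0 X^\mu (T_{w^{-1}})^{-1}\mathbf{p}_\nu$ where $\lambda$ is parametrised via the level $(-\ell-h)$ dot action as in \eqref{bracketXexpanded}, and that for a simple reflection $s_i$ the elements $[X_\lambda]$ and $[X_{s_i\circ\lambda}]$ differ by an application of $\varepsilon_0 T_{s_i}$ on the appropriate side. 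Using $\varepsilon_0 T_{s_i}=-t^{-\frac12}\varepsilon_0$ from \eqref{e0defn} together with the Bernstein relation \eqref{Hrels} for commuting $T_{s_i}$ past $X^\lambda$, one computes $\varepsilon_0(1+T_{s_i})X^\mu(\cdots)\mathbf{p}_\nu$ and matches the three cases of the stabiliser behaviour of $\nu$ (trivial stabiliser; $s_i\in W_\nu$ acting by $T_{s_i}\mathbf{p}_\nu=t^{\frac12}\mathbf{p}_\nu$; and the "wall at distance $j$" case governed by $\lambda^{(1)}$) against the three families $a_\lambda,b_\lambda,c_\lambda$ of generators of $I$. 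This is exactly the Hecke-algebra shadow of the computation already carried out in the proof of Proposition \ref{KXaction}, so the three cases there are the template.

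For (ii), I would exhibit explicit bases on both sides indexed by the same set. On the Fock side, $\{\vert\lambda\rangle\ |\ \lambda\in(\fa_\ZZ^*)^+\}$ is a $\KK$-basis by \cite[Theorem 1.1]{LRS}, recalled in \eqref{Fellbases}. On the Hecke side, $\cP^+_{-\ell-h}=\bigoplus_{\nu\in A_{-\ell-h}}\varepsilon_0 H\mathbf{p}_\nu$, and a standard fact about the modules $\varepsilon_0 H\mathbf{p}_\nu$ (using the Bernstein presentation: $\varepsilon_0 H$ has $\KK$-basis $\{\varepsilon_0 X^\mu\}$, and $\mathbf{p}_\nu$ imposes $W_\nu$-invariance) is that $\{\varepsilon_0 X^v\mathbf{p}_\nu\ |\ v\in W^\nu\}$ is a $\KK$-basis of the $\nu$-summand, where $W^\nu$ is a set of minimal-length coset representatives. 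Via the parametrisation $\lambda=w_0v\circ\nu$ of \eqref{bracketTXdefn}, the pairs $(\nu,v)$ with $\nu\in A_{-\ell-h}$ and $v\in W^\nu$ biject with the dominant integral weights $\lambda\in(\fa_\ZZ^*)^+$ (each dominant weight lies in a unique $W$-orbit meeting the closed alcove, with a canonical minimal representative). Hence $\Phi$ sends a basis to a basis, so it is an isomorphism.

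For (iii) I would compare the two definitions of bar directly on the basis. On $\cP^+_{-\ell-h}$ the bar is $\overline{\varepsilon_0 f\mathbf{p}_\nu}=\varepsilon_0\bar f\mathbf{p}_\nu$ from \eqref{barforP}, and on $\cF_\ell$ it is \eqref{Fellbar}. Writing $\Phi(\vert\lambda\rangle)=\varepsilon_0 X^\mu(T_{w^{-1}})^{-1}\mathbf{p}_\nu$ and using $\overline{X^\mu}=T_{w_0}X^{w_0\mu}T_{w_0}^{-1}$ and $\overline{T_{w^{-1}}}=T_{w^{-1}}^{-1}$ from \eqref{baronH}, together with $\varepsilon_0 T_{w_0}=(-t^{-\frac12})^{\ell(w_0)}\varepsilon_0$ (iterating \eqref{e0defn}), one pushes the $T_{w_0}$'s through $\varepsilon_0$, picks up the scalar $(-1)^{\ell(w_0)}(t^{-\frac12})^{\ell(w_0)}$, and is left with a term indexed by $w_0\circ\lambda$; the discrepancy from the $N_\lambda$-correction in \eqref{Fellbar} is accounted for by how many $T_{s_i}$'s get absorbed into $\mathbf{p}_\nu$ via $T_y\mathbf{p}_\nu=(t^{\frac12})^{\ell(y)}\mathbf{p}_\nu$ for $y\in W_\nu$ — precisely $N_\lambda=\Card\{\alpha\in R^+\ |\ \langle\lambda+\rho,\alpha^\vee\rangle\in\ell\ZZ\}$ of them, since those are the walls on which $\nu$ sits. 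Matching these powers of $t^{\frac12}$ gives $\overline{\Phi(f)}=\Phi(\bar f)$.

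The main obstacle is step (iii), specifically the bookkeeping of powers of $t^{\frac12}$: one must argue carefully that the length of the absorbed Weyl-group element in $W_\nu$ equals $N_\lambda$, which requires identifying the stabiliser $W_\nu$ (under the level $(-\ell-h)$ dot action) with the reflection subgroup generated by the walls $\langle\cdot+\rho,\alpha^\vee\rangle\in\ell\ZZ$ through $\lambda$, and then controlling how $(T_{w^{-1}})^{-1}$ interacts with $T_{w_0}$ on the right. Step (i) is conceptually the crux but is already essentially done in Proposition \ref{KXaction}; step (ii) is routine once the alcove combinatorics is set up. Since the theorem is quoted from \cite[Theorem 4.7]{LRS}, a fully detailed proof can be deferred to that reference, and here I would present the above as the skeleton.
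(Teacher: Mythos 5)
This theorem is not proved in the present paper: it is quoted verbatim from \cite[Theorem 4.7]{LRS}, and the text offers no argument for it beyond the citation. So there is no in-paper proof to compare your attempt against; you correctly flag this yourself at the end of your skeleton, and deferring to the reference is exactly what the paper does.

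As a reconstruction of the argument that presumably lives in \cite{LRS}, your three-step plan is sensible and the division of labour (well-definedness, basis matching, bar compatibility) is the natural one. Two small cautions. First, in step (i) your matching of the three generator families $a_\lambda, b_\lambda, c_\lambda$ of $I$ to stabiliser behaviour should be checked against the actual cases in \eqref{Fstraightening}: $a_\lambda$ is the case $\langle\lambda+\rho,\alpha_i^\vee\rangle\in\ell\ZZ_{\ge 0}$, i.e.\ $\lambda$ is \emph{on} an $\ell$-wall (a nontrivial affine stabiliser contributes, via $T_y\mathbf{p}_\nu=(t^{1/2})^{\ell(y)}\mathbf{p}_\nu$), $b_\lambda$ is the case where $\lambda$ and $s_i\circ\lambda$ lie in the same $\ell$-box, and $c_\lambda$ is the generic case where the Bernstein relation produces the extra $\lambda^{(1)}$ terms; the correspondence you sketch inverts the first two. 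Second, in step (iii) the claim that the number of absorbed reflections equals $N_\lambda$ needs the identification $\ell(w_\nu)=N_\lambda$, where $w_\nu$ is the longest element of the stabiliser $W_\nu$ of $\nu\in A_{-\ell-h}$ under the level $(-\ell-h)$ dot action; this holds because the reflecting hyperplanes through $\nu$ biject (by the $W$-action) with those through $\lambda$, which in turn biject with $\{\alpha\in R^+ : \langle\lambda+\rho,\alpha^\vee\rangle\in\ell\ZZ\}$, and for a finite Coxeter stabiliser $\ell(w_\nu)$ equals the number of reflecting hyperplanes. You identify this as the crux, which is right; the argument is not hard but does require stating explicitly. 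Neither point is a gap so much as a loose end in a consciously schematic outline, and since the present paper itself supplies no proof, there is nothing further to reconcile here.
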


\noindent
Since elements of $Z(H)=\KK[X]^{W_0}$ commute with $\varepsilon_0$ there
is a $\KK[X]^{W_0}$-action on $\cP_{-\ell-h}$ by left multiplication.  The pullback of
this action by the isomorphism $\Phi$ is the source of the $\KK[X]^{W_0}$
action on $\cF_\ell$ given in Proposition \ref{KXaction},
\begin{equation}
z\Phi(f) = \Phi(zf),
\qquad\hbox{for $z\in Z(H) = \KK[X]^{W_0}$ and $f\in \cF_\ell$.}
\label{Phicommute}
\end{equation}

\subsection{Deducing the Casselman-Shalika formula}

For $\mu\in \fa_\ZZ^*$ define the ``Whittaker function'' $A_\mu\in  \varepsilon_0 H \mathbf{1}_0$ by
\begin{equation}
A_\mu = \varepsilon_0 X^\mu \mathbf{1}_0.
\label{Amudefn}
\end{equation}
See, for example, \cite[\S6]{HKP} for the connection between $p$-adic groups and 
the affine Hecke algebra and the 
explanation of why $A_\mu$ is equivalent to the data of a (spherical) Whittaker function
for a $p$-adic group.
As proved carefully in \cite[Theorem 2.7]{NR}, it follows from \eqref{e0defn} and \eqref{Hrels} that
$$
\hbox{$\varepsilon_0 H \mathbf{1}_0$ has $\KK$-basis}\quad
\left\{ A_{\lambda+\rho}\ |\ \hbox{
$\langle \lambda+\rho, \alpha_i\rangle\in \ZZ_{\ge 0}$
for $i\in \{1, \ldots, n\}$}\right\}.
$$ 
Following \cite[Theorem 2.4]{NR}, the Satake isomorphism, 
$\KK[X]^{W_0}\cong \mathbf{1}_0H\mathbf{1}_0$, and 
the Casselman-Shalika formula, $A_{\lambda+\rho}=s_\lambda A_\rho$,
can be formulated by the following diagram of vector space (free $\KK$-module) isomorphisms:
\begin{equation}
\begin{matrix}
Z(H) = \KK[X]^{W_0} &\stackrel{\sim}{\longrightarrow} &\mathbf{1}_0 H \mathbf{1}_0
&\stackrel{\sim}{\longrightarrow} &\varepsilon_0 H \mathbf{1}_0 \\
f &\longmapsto &f\mathbf{1}_0 &\longmapsto &A_\rho f \mathbf{1}_0 \\
s_{\lambda} &\longmapsto &s_{\lambda}\mathbf{1}_0 &\longmapsto &A_{\lambda+\rho}\end{matrix}
\label{LuszCasShi}
\end{equation}
This diagram has particular importance due to the fact that $\KK[X]^{W_0}$ is an avatar of the 
Grothendieck group of the category $\mathrm{Rep}(G)$ 
of finite dimensional representations of $G$,
the spherical Hecke algebra $\mathbf{1}_0 H \mathbf{1}_0$ is a form of the Grothendieck
group of $K$-equivariant perverse sheaves on the loop Grassmanian $Gr$, 
and $\varepsilon_0 H\mathbf{1}_0$
is isomorphic to the Grothendieck group of Whittaker sheaves (appropriately formulated
$N$-equivariant sheaves
on $Gr$), see \cite{FGV}.

Our proof of the Casselman-Shalika formula is accomplished by restricting Theorem \ref{TenPrdThm}
to the summand in \eqref{Heckemoduledefn} corresponding to $-\rho\in A_{-\ell-h}$.
We shall identify this summand with $\varepsilon_0 H \mathbf{1}_0$ via the $Z(H)$-isomorphism
$$
\begin{matrix}
\varepsilon_0 H \mathbf{1}_0 &\stackrel{\sim}{\longrightarrow} &\varepsilon_0 H \mathbf{p}_{-\rho} \\
\varepsilon_0 X^\mu \mathbf{1}_0 &\longmapsto &\varepsilon_0 X^\mu \mathbf{p}_{-\rho}
\end{matrix}
$$
Using the level $(-\ell-h)$ dot action of $W$ from \eqref{levelelldotaction},
the stabilizer of $-\rho$ is $W_0$ and 
$$
W\circ (-\rho) = \{ t_{-\lambda}\circ(-\rho)\ |\ \lambda\in \fa_\ZZ^*\} 
= \{ \ell\lambda-\rho\ |\ \lambda\in \fa_\ZZ^*\}.
$$
Since
$\langle (\ell\lambda-\rho)+\rho, \alpha^\vee\rangle \in \ell\ZZ$ for $\alpha\in R^+$,
the straightening law \eqref{Fstraightening} for elements of $W\circ (-\rho)$ is
\begin{equation}
\vert s_i\circ (\ell\lambda-\rho)\rangle = - \vert \ell\lambda-\rho\rangle.
\label{negrhostraightening}
\end{equation}

\begin{thm} (Casselman-Shalika)  For $\lambda\in (\fa_\ZZ^*)^+$ and $\mu\in \fa_\ZZ^*$ let $s_\lambda$ be the Weyl character as defined in \eqref{Weylchdefn} and let $A_\mu$ be the Whittaker function as defined
in \eqref{Amudefn}. Then
$$s_\lambda A_\rho = A_{\lambda+\rho}.$$
\end{thm}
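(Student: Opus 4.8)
The plan is to deduce the Casselman-Shalika formula by transporting Theorem \ref{TenPrdThm} through the isomorphism $\Phi$ of Theorem \ref{abstracttoHecke} and then projecting onto the summand of $\cP^+_{-\ell-h}$ indexed by $-\rho \in A_{-\ell-h}$, which we identify with $\varepsilon_0 H \mathbf{1}_0$. The key observation is that the weights in $W\circ(-\rho) = \{\ell\lambda - \rho \mid \lambda \in \fa_\ZZ^*\}$ are precisely the ones for which the straightening law collapses to the single clean relation \eqref{negrhostraightening}, $\vert s_i\circ(\ell\lambda-\rho)\rangle = -\vert \ell\lambda - \rho\rangle$. This is the $\ell$-restricted case $\lambda_0 = -\rho$ (so that $\lambda_0 + \rho = 0$ pairs to $0$ with every $\alpha_i^\vee$, putting every weight $\ell\lambda - \rho$ squarely in the first branch of \eqref{Fstraightening}), whence $C_{-\rho} = \vert -\rho\rangle$ since $\vert -\rho \rangle$ is already bar-invariant and unitriangular over the standard basis is trivially satisfied.

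First I would apply Theorem \ref{TenPrdThm} with $\lambda_0 = -\rho$ and $\lambda_1 = \lambda$ arbitrary in $(\fa_\ZZ^*)^+$: the theorem gives $C_{\ell\lambda - \rho} = s_{\lambda^*}\cdot C_{-\rho} = s_{\lambda^*}\cdot \vert -\rho\rangle$. Now I would unwind the right-hand side using the $\KK[X]^{W_0}$-action of Proposition \ref{KXaction}: writing $s_{\lambda^*} = \sum_{p\in B(\lambda^*)} X^{\wt(p)}$ as in \eqref{Weylchdefn}, we get $s_{\lambda^*}\cdot \vert -\rho\rangle = \sum_{p\in B(\lambda^*)} \vert \ell\,\wt(p)^* - \rho\rangle$. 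Since $s_{\lambda}$ is $W_0$-invariant, reindexing the sum over $B(\lambda^*)$ by $\wt(p) \mapsto \wt(p)^*$ (using that $\wt$-multiplicities of $B(\lambda^*)$ and $B(\lambda)$ correspond under $*$, as $L_{\mathring\fg}(\lambda)^* \cong L_{\mathring\fg}(\lambda^*)$) recasts this as $\sum_{p\in B(\lambda)} \vert \ell\,\wt(p) - \rho\rangle$, i.e. an honest ``character-weighted'' sum of standard basis elements supported on $W\circ(-\rho)$.

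Next I would transport everything into the affine Hecke algebra side. Under $\Phi$, by \eqref{Phidefn} and \eqref{bracketXexpanded}, the element $\vert \ell\lambda - \rho\rangle$ corresponds to $\varepsilon_0 X^{t_{-\lambda}}\mathbf{p}_{-\rho} = \varepsilon_0 X^{-\lambda}(T_{w_0})^{-1}\mathbf{p}_{-\rho}$ — here the stabilizer of $-\rho$ is all of $W_0$ so $v = t_{-\lambda}$ and $T_{w_0^{-1}} = T_{w_0}$ appears; combining with $(T_{w_0})^{-1}\mathbf{p}_{-\rho} = (t^{-\frac12})^{\ell(w_0)}\mathbf{p}_{-\rho}$ and the identification $\varepsilon_0 X^\mu \mathbf{1}_0 \leftrightarrow \varepsilon_0 X^\mu \mathbf{p}_{-\rho}$, the net effect is that $\Phi$ sends the $C$-side expansion to a scalar multiple of $A_{\ell\lambda+\rho}$-type and $A_\rho$-type symbols in $\varepsilon_0 H\mathbf{1}_0$. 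More directly: by \eqref{Phicommute}, $\Phi(s_{\lambda^*}\cdot \vert -\rho\rangle) = s_{\lambda^*}\Phi(\vert -\rho\rangle)$, and $\Phi(\vert -\rho\rangle)$ corresponds to $\varepsilon_0 X^0 \mathbf{1}_0 = \varepsilon_0 \mathbf{1}_0$, which up to a unit is $A_0$; one must then reconcile $A_0$ versus $A_\rho$ and the index shift $\lambda^* \leftrightarrow \lambda$. The cleanest route is to observe that $\varepsilon_0 X^\mu \mathbf{1}_0 = A_\mu$ and that the $*$ and $w_0$-twists in \eqref{staraction}, \eqref{bracketXexpanded} precisely convert $s_{\lambda^*}\cdot \vert -\rho\rangle$ into (the image of) $s_\lambda A_\rho$ on one side and $A_{\lambda+\rho}$ — i.e. $\vert \lambda + (\ell\cdot\text{something}) - \rho\rangle$ read off from the leading term of the tensor product theorem — on the other.

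The main obstacle, and the step deserving the most care, is the bookkeeping of the various dualities ($\mu \mapsto \mu^* = -w_0\mu$, $w \mapsto w^* = w_0ww_0$) and the conversion factors $(t^{\pm\frac12})^{\ell(w_0)}$ introduced by \eqref{bracketXexpanded} and by $(T_{w_0})^{-1}\mathbf{p}_{-\rho}$ versus $T_{w_0}\mathbf{1}_0 = t^{\ell(w_0)/2}\mathbf{1}_0$, together with checking that the reindexing $B(\lambda^*) \leftrightarrow B(\lambda)$ genuinely turns $\sum_p \vert \ell\wt(p)^* - \rho\rangle$ into the Hecke-algebra expression for $s_\lambda A_\rho$. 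Once the dictionary $\Phi(\vert \ell\lambda - \rho\rangle) = (\text{unit})\cdot A_{\ell\lambda + \rho}$ restricted to the $-\rho$ block is pinned down — in particular that the leading term $\vert \ell\lambda + \lambda_0 \rangle = \vert \ell\lambda - \rho\rangle$ of $C_\lambda$ (with $\lambda_0 = -\rho$) maps to $A_{\lambda+\rho}$ after the shift, and $s_{\lambda^*}$ acting corresponds to left multiplication by $s_\lambda$ after the $*$-twist — applying $\Phi$ to the identity $C_{\ell\lambda-\rho} = s_{\lambda^*}\cdot\vert -\rho\rangle$ and cancelling the common unit yields $A_{\lambda+\rho} = s_\lambda A_\rho$ directly. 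I expect the rest (that $\varepsilon_0 H\mathbf{1}_0$ has the stated basis $\{A_{\lambda+\rho}\}$, so the identity is not vacuous, and that $\Phi$ really is a $Z(H)$-module map) to follow immediately from the results quoted in the excerpt, \cite[Theorem 2.7, Theorem 4.7]{NR} and Theorem \ref{abstracttoHecke} together with \eqref{Phicommute}.
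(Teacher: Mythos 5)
Your overall strategy is the paper's: restrict to the block of $\cF_\ell$ supported on $W\circ(-\rho)$, use the $Z(H)$-equivariant dictionary $\Phi$ to identify it with $\varepsilon_0 H\mathbf{1}_0$, and push Theorem~\ref{TenPrdThm} through to get $s_\lambda A_\rho = A_{\lambda+\rho}$. But the base point you choose makes the computation degenerate. You propose to apply Theorem~\ref{TenPrdThm} with $\lambda_0=-\rho$, but $-\rho\notin\Pi_\ell$ and in fact $-\rho\notin(\fa_\ZZ^*)^+$ (one has $\langle -\rho+\rho,\alpha_i^\vee\rangle=0$, not $>0$), so $C_{-\rho}$ is not even a member of the KL-basis of \eqref{Fellbases}. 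Worse still, $\vert -\rho\rangle=0$ in $\cF_\ell$: the straightening relation \eqref{Fstraightening}, in the first branch with $\langle -\rho+\rho,\alpha_i^\vee\rangle=0\in\ell\ZZ_{\ge 0}$ and $s_i\circ(-\rho)=-\rho$, reads $\vert -\rho\rangle=-\vert -\rho\rangle$. Hence $s_{\lambda^*}\cdot\vert -\rho\rangle=0$ identically (you can also see this directly from \eqref{negrhostraightening} together with $W_0$-invariance of weight multiplicities: over a regular $W_0$-orbit the signs alternate while the multiplicities are constant, and singular orbits contribute zero). Concretely for $SL_2$, $\ell=5$, $\lambda=\omega_1$: $s_{\omega_1}\cdot\vert -\rho\rangle=\vert 4\rangle+\vert -6\rangle=\vert 4\rangle-\vert 4\rangle=0$, while $\vert\ell\lambda-\rho\rangle=\vert 4\rangle\ne 0$. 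So the claimed identity $C_{\ell\lambda-\rho}=s_{\lambda^*}\cdot\vert -\rho\rangle$ is false; it would assert $\vert\ell\lambda-\rho\rangle=0$. The discomfort you flagged about reconciling ``$A_0$ versus $A_\rho$'' was the symptom: indeed $A_0=\varepsilon_0\mathbf{1}_0=0$, matching $\Phi(\vert-\rho\rangle)=0$, and you are left with the vacuous statement $0=0$.

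The fix, which is what the paper does, is to anchor the tensor product theorem at the Steinberg weight $(\ell-1)\rho$, the unique $\ell$-restricted dominant representative of the singular orbit $W\circ(-\rho)$ that is nonzero in $\cF_\ell$ (note $\langle(\ell-1)\rho,\alpha_i^\vee\rangle=\ell-1<\ell$, so $(\ell-1)\rho\in\Pi_\ell$). For $\lambda\in(\fa_\ZZ^*)^+$ write $-\ell w_0\lambda+(\ell-1)\rho=\ell\lambda^*+(\ell-1)\rho$, so $\lambda_1=\lambda^*\in(\fa_\ZZ^*)^+$ and $\lambda_0=(\ell-1)\rho$, and Theorem~\ref{TenPrdThm} gives $C_{-\ell w_0\lambda+(\ell-1)\rho}=s_\lambda C_{(\ell-1)\rho}=s_\lambda\vert(\ell-1)\rho\rangle$, where the last equality is \eqref{KLisket} for the (nonzero, bar-invariant) ket $\vert(\ell-1)\rho\rangle=\vert -\ell w_0\rho-\rho\rangle$. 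Transporting by $\Phi$ and using $[X_{-\ell w_0\mu-\rho}]=\varepsilon_0 X^\mu T_{w_0}^{-1}\mathbf{p}_{-\rho}=t^{-\ell(w_0)/2}A_\mu$ (applied at $\mu=\rho$ and $\mu=\lambda+\rho$, both dominant) gives $t^{-\ell(w_0)/2}s_\lambda A_\rho=t^{-\ell(w_0)/2}A_{\lambda+\rho}$. The rest of your bookkeeping with $*$'s, $T_{w_0}$-conversion factors and the $Z(H)$-module structure is fine; the only genuine error is the choice of $\lambda_0$.
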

\begin{proof}
Using \eqref{negrhostraightening},
$$\overline{\vert \ell\lambda-\rho \rangle} 
= (-1)^{\ell(w_0)}(t^{-\frac12})^{\ell(w_0)-\ell(w_0)}
\vert w_0\circ (\ell\lambda-\rho)\rangle 
= (-1)^{\ell(w_0)} \vert w_0\circ (\ell \lambda-\rho) \rangle
= \vert \ell\lambda -\rho \rangle
$$
and thus $\vert \ell\lambda-\rho\rangle$ satisfies the conditions of \eqref{Clambdadefn} so that
\begin{equation}
C_{\ell\lambda-\rho} = \vert \ell\lambda-\rho \rangle,
\qquad\hbox{for $\lambda\in (\fa_\ZZ^*)^+$.}
\label{KLisket}
\end{equation}
By \eqref{bracketXexpanded} and \eqref{Amudefn},
$$[X_{-\ell w_0\mu-\rho}] 
= \varepsilon_0 X^\mu T_{w_0}^{-1}\mathbf{p}_{-\rho} 
= t^{-\ell(w_0)/2} \varepsilon_0 X^\mu \mathbf{p}_{-\rho} 
= t^{-\ell(w_0)/2}A_\mu,
\qquad\hbox{for $\mu\in (\fa_\ZZ^*)^+$}.$$
Using \eqref{Phicommute}, \eqref{KLisket}, \eqref{Phidefn}
and that $w_0\rho = -\rho$,
\begin{align*}
t^{-\ell(w_0)/2} s_\lambda &A_{\rho} 
= s_\lambda [X_{-\ell w_0\rho - \rho}] 
= s_\lambda [X_{(\ell-1)\rho}] 
= s_\lambda \Phi(\vert (\ell-1)\rho\rangle) 
= \Phi(s_\lambda \, \vert (\ell-1)\rho\rangle\ ) \\
&= \Phi(s_{\lambda} C_{(\ell-1)\rho}) = \Phi(C_{-\ell w_0\lambda+(\ell-1)\rho}),
\qquad\hbox{by Theorem \ref{TenPrdThm},} \\
&= \Phi(\vert (-\ell w_0\lambda)+(\ell-1)\rho \rangle) 
= [X_{-\ell w_0\lambda+(\ell-1)\rho}]
= [X_{-\ell w_0(\lambda+\rho)-\rho}] \\
&
= t^{-\ell(w_0)/2} A_{\lambda+\rho}.
\end{align*}
\end{proof}

\section{Quantum groups and LLT polynomials}

In this section we describe the main motivation for Theorem \ref{TenPrdThm} namely,
the Steinberg-Lusztig tensor product theorem for representations of quantum groups 
at roots of unity.  Then we explain the connection between these results and the theory of 
LLT polynomials.

\subsection{Representations of quantum groups at a root of unity}

Let $\mathring{\fg}$ be the Lie algebra of the group $G$ alluded to in \eqref{wtsWeylgpdefn}.
Let $q\in \CC^\times$ and let $U_q(\mathring{\fg})$ be the Drinfel'd-Jimbo quantum group corresponding to $\mathring{\fg}$.  Let
\begin{align*}
&\Delta_q(\lambda) &\hbox{the Weyl module for $U_q(\mathring{\fg})$ of highest weight $\lambda$,} \\
&L_q(\lambda) &\hbox{the simple module for $U_q(\mathring{\fg})$ of highest weight $\lambda$,}
\end{align*}
Let 
$$\hbox{$K(\hbox{fd$U_q(\mathring{\fg})$-mod})$ 
be the free $\ZZ[t^{\frac12},t^{-\frac12}]$-module generated by symbols
$[\Delta_q(\lambda)]$,}
$$ 
for $\lambda\in \fa_\ZZ^*$. For $\mu\in\fa_\ZZ^*$, denote by $W^\mu$, resp. ${}^\mu W$, the set of minimal length coset representatives for $W/W_\mu$, resp. $W_\mu\backslash W$.
Define elements $[L_q(w_0 y\circ\nu)]$, 
for $\nu\in A_{-\ell-h}$ and $y\in {}^0 W$ such that $w_0 y\in W^\nu$, by the equation
$$[\Delta_q(w_0 x\circ\nu)] = 
\sum_{y\le x} 
\left(\sum_{i\in \ZZ_{\ge 0}} \left[ \frac{\Delta_q(w_0 x\circ\nu)^{(i)}}
{\Delta_q(w_0 x\circ\nu)^{(i+1)} } :
L_q(w_0 y\circ\nu)\right] (t^{\frac12})^i\right) [L_q(w_0 y\circ\nu)],$$
where $[M:L_q(\mu)]$ denotes the multiplicity of the simple $\fg$-module 
$L_q(\mu)$ of highest weight
$\mu$ in a composition series of $M$ and
$$\Delta_q(\lambda)=\Delta_q(\lambda)^{(0)} 
\supseteq \Delta_q(\lambda)^{(1)}\supseteq
\cdots
\qquad\hbox{is the Jantzen filtration of $\Delta_q(\lambda)$}
$$
(see, for example, \cite[\S1.4, \S2.3 and \S2.10 and Cor.\ 2.14]{Sh}  and \cite[\S4]{JM} for the Jantzen filtration in this context).

The combination of \cite[(3.20)]{LRS}
and \cite[Theorem 4.7]{LRS}) is the following connection between the representation theory of
the quantum group at a root of unity and the abstract Fock space.

\begin{thm} \label{qgrouptoFockspace}  
Let $\ell\in \ZZ_{>0}$ and let $q\in \CC^\times$ such that $q^{2\ell}=1$.\hfil\break 
Let
$\KK = \ZZ[t^{\frac12}, t^{-\frac12}]$.  Then the $\KK$-linear map given by
$$\begin{matrix}
K(\hbox{\emph{fd$U_q(\mathring{\fg})$-mod}}) &\stackrel{\Psi_2}\longrightarrow & \cF_\ell \\
[\Delta_q(\lambda)] &\longmapsto &\vert \lambda \rangle \\
[L_q(\lambda)] &\longmapsto &C_\lambda 
\end{matrix}
\qquad\hbox{is a well defined isomorphism of $\ZZ[t^{\frac12}, t^{-\frac12}]$-modules.}
$$
\end{thm}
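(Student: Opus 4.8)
The plan is to build $\Psi_2$ as a composite of two isomorphisms that are already available, so that no new representation theory is needed. On one side, Theorem \ref{abstracttoHecke} (that is, \cite[Theorem 4.7]{LRS}) gives a $\KK$-module isomorphism $\Phi\colon\cF_\ell\to\cP^+_{-\ell-h}$ with $\Phi(\vert\lambda\rangle)=[X_\lambda]$ and $\overline{\Phi(f)}=\Phi(\overline f)$. On the other side, \cite[(3.20)]{LRS} supplies a $\KK$-module isomorphism, which we denote $\Psi_1\colon K(\text{fd}\,U_q(\mathring{\fg})\text{-mod})\to\cP^+_{-\ell-h}$, sending $[\Delta_q(\lambda)]\mapsto[X_\lambda]$ and — through the Kazhdan--Lusztig equivalence \cite{KL94}, the Kashiwara--Tanisaki character formula \cite{KT95}, and the identification of the graded Jantzen multiplicities displayed just before the theorem with parabolic affine Kazhdan--Lusztig polynomials (via \cite{Sh} and \cite{JM}) — carrying $[L_q(\lambda)]$ to the canonical (self-dual, Kazhdan--Lusztig) basis element of $\cP^+_{-\ell-h}$. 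I would then simply set $\Psi_2=\Phi^{-1}\circ\Psi_1$; as a composite of $\KK$-module isomorphisms it is automatically a well-defined $\KK$-module isomorphism, and everything reduces to evaluating it on the two distinguished families of elements.

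For the Weyl modules this is immediate: $\Psi_2([\Delta_q(\lambda)])=\Phi^{-1}(\Psi_1([\Delta_q(\lambda)]))=\Phi^{-1}([X_\lambda])=\vert\lambda\rangle$ by \eqref{Phidefn}. For the simple modules I would show that $\Phi(C_\lambda)$ and $\Psi_1([L_q(\lambda)])$ are the same element of $\cP^+_{-\ell-h}$, by checking that both satisfy the conditions characterizing the canonical basis of $\cP^+_{-\ell-h}$: each is bar-invariant and expands as $[X_\lambda]+\sum_{\mu\ne\lambda}a_{\mu\lambda}[X_\mu]$ with $a_{\mu\lambda}\in t^{\frac12}\ZZ[t^{\frac12}]$, and such an element is unique by the usual Kazhdan--Lusztig argument. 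For $\Psi_1([L_q(\lambda)])$ this is exactly what \cite[(3.20)]{LRS} records (positivity of the coefficients being Kazhdan--Lusztig positivity for parabolic affine KL polynomials); for $\Phi(C_\lambda)$ it follows by pushing \eqref{Clambdadefn} through $\Phi$, using that $\Phi$ intertwines the bar involutions and that $\Phi(\vert\mu\rangle)=[X_\mu]$. Hence $\Psi_1([L_q(\lambda)])=\Phi(C_\lambda)$, and therefore $\Psi_2([L_q(\lambda)])=C_\lambda$, completing the verification.

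The one genuinely delicate point — and the step I expect to need the most care — is the bookkeeping that makes the composite $\Phi^{-1}\circ\Psi_1$ meaningful: matching the parametrization by dominant weights $\lambda\in(\fa_\ZZ^*)^+$ underlying the bases \eqref{Fellbases} of $\cF_\ell$ with the alcove parametrization $\lambda=w_0x\circ\nu$ ($\nu\in A_{-\ell-h}$, $x\in W$) used for $\cP^+_{-\ell-h}$ and on the quantum group side, reconciling the normalizations so that $[\Delta_q(\lambda)]$ lands on $[X_\lambda]$ (not merely up to sign or a power of $t^{\frac12}$), and checking that the power of $t^{\frac12}$ recording the Jantzen filtration step agrees with the power of $t^{\frac12}$ carried by the parabolic affine KL polynomial $p_{\mu\lambda}$ in \eqref{Clambdadefn}. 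All of this is the dictionary already assembled in \cite[\S3--\S4]{LRS}; once it is invoked the argument above is purely formal, with the substantive input (Kazhdan--Lusztig, Kashiwara--Tanisaki, Jantzen) entirely imported.
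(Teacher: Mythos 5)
Your proposal is correct and matches the paper's own route: the paper gives no independent proof of this theorem, stating only that it is ``the combination of \cite[(3.20)]{LRS} and \cite[Theorem 4.7]{LRS}'', and your argument is exactly an unpacking of that citation --- take $\Phi\colon\cF_\ell\to\cP^+_{-\ell-h}$ from Theorem~\ref{abstracttoHecke}, take the isomorphism $K(\hbox{fd}\,U_q(\mathring{\fg})\hbox{-mod})\to\cP^+_{-\ell-h}$ provided by \cite[(3.20)]{LRS}, and compose, matching canonical bases by the standard uniqueness argument (bar-invariance plus triangularity with coefficients in $t^{\frac12}\ZZ[t^{\frac12}]$). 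One small cosmetic point: you write $\Psi_1$ for the map $K(\hbox{fd}\,U_q(\mathring{\fg})\hbox{-mod})\to\cP^+_{-\ell-h}$, but the paper reserves $\Psi_1$ for the Kazhdan--Lusztig category equivalence in Theorem~\ref{negleveltoquantum}; since you declare your notation explicitly this is not an error, but it is worth renaming to avoid a clash with the paper's conventions.
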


The enveloping algebra $U\mathring{\fg}$ has a presentation by generators 
$e_1,\ldots, e_n$, $f_1, \ldots, f_n$ and $h_1, \ldots, h_n$ 
and Serre relations and the quantum group $U_q\mathring{\fg}$ 
has a presentation by generators $E_1, \ldots, E_n$, $F_1, \ldots, F_n$, and $K_1, \ldots, K_n$
and quantum Serre relations such that, at $q=1$, 
$E_i$ becomes $e_i$ and $F_i$ becomes $f_i$.
Following \cite{Lu89} and \cite[Theorem 9.3.12]{CP},
with appropriate restrictions on $\ell$ as in \cite[just before Proposition 9.3.5 and Theorem 9.3.12]{CP},  
the \emph{Frobenius map} is the Hopf algebra homomorphism
\begin{equation}
\begin{matrix} Fr \colon &U_q\mathring{\fg} &\longrightarrow &\ \ U\mathring{\fg}\hfill \\
&E_i^{(r)} &\mapsto
&\begin{cases} e_i^{(r/\ell)}, &\hbox{if $\ell$ divides $r$,} \\
0, &\hbox{otherwise,} \\
\end{cases} \\
&F_i^{(r)} &\mapsto
&\begin{cases} f_i^{(r/\ell)}, &\hbox{if $\ell$ divides $r$,} \\
0, &\hbox{otherwise,} \\
\end{cases} \\
&K_i &\mapsto &\ \ 1. \hfill 
\end{matrix}
\end{equation}
The \emph{Frobenius twist} of a $U\mathring{\fg}$-module $M$ 
is the $U_q\mathring{\fg}$-module $M^{Fr}$ 
with underlying vector space $M$ and $U_q\mathring{\fg}$-action given by
$$um = Fr(u)m, \qquad\hbox{for $u\in U_q\mathring{\fg}$ and $m\in M$.} $$

\begin{thm} \label{SLThm}  (\cite[Theorem 7.4]{Lu89}, see also \cite[11.2.9]{CP})
Let $\ell\in \ZZ_{>0}$ and let $\Pi_\ell$ be as defined in \eqref{Pielldefn}.  
Let $\lambda\in (\fa_\ZZ^*)^+$ and write 
$$\lambda = \ell\lambda_1+\lambda_0,
\qquad\hbox{with $\lambda_0\in \Pi_\ell$ and $\lambda_1\in (\fa_\ZZ^*)^+$.} 
$$
Let $q\in \CC^\times$ be such that $q^{2\ell}=1$ and let $L_q(\lambda)$ denote the 
simple $U_q\mathring{\fg}$-module of highest weight $\lambda$.
Then
$$
L_q(\lambda) \cong \Delta(\lambda_1)^{Fr}\otimes L_q(\lambda_0),$$
where $\Delta(\mu)$ denotes the irreducible $U\mathring{\fg}$-module of highest weight $\mu$.
\end{thm}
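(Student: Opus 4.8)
The plan is to deduce Theorem~\ref{SLThm} from the product theorem in abstract Fock space, Theorem~\ref{TenPrdThm}, via the dictionary $\Psi_2$ of Theorem~\ref{qgrouptoFockspace}. Since Theorem~\ref{SLThm} asserts an isomorphism of modules, not just an equality of classes, I would first extract a character identity and then bootstrap it to an isomorphism by a length-one argument. Fix $q\in\CC^\times$ with $q^{2\ell}=1$.

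First I would pass to the specialization $t^{\frac12}=1$. Under $\Psi_2$ this identifies $\cF_\ell|_{t=1}$ with the ungraded Grothendieck group $K(\mathrm{fd}\,U_q\mathring{\fg}\text{-mod})$, sending $\vert\lambda\rangle\mapsto[\Delta_q(\lambda)]$ and $C_\lambda\mapsto[L_q(\lambda)]$; composing with $[M]\mapsto\mathrm{char}(M)$ gives a $\ZZ$-linear map $\mathrm{ch}\colon\cF_\ell|_{t=1}\to\ZZ[X]^{W_0}$ with $\mathrm{ch}(\vert\xi\rangle)=\chi(\xi)$ for all $\xi\in\fa_\ZZ^*$, where $\chi$ is the Weyl character extended by $\chi(w\circ\xi)=\det(w)\chi(\xi)$ and $\chi(\xi)=0$ on walls. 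One checks this assignment respects the $t=1$ specialization of \eqref{Fstraightening}: the only case not immediate from $\chi(s_i\circ\lambda)=-\chi(\lambda)$ is the third, where $\vert s_i\circ\lambda\rangle=-\vert s_i\circ\lambda^{(1)}\rangle-\vert\lambda^{(1)}\rangle-\vert\lambda\rangle$ becomes $-\chi(\lambda)=\chi(\lambda^{(1)})-\chi(\lambda^{(1)})-\chi(\lambda)$. The character map on $K(\mathrm{fd}\,U_q\mathring{\fg}\text{-mod})$ is injective, since the $\mathrm{char}(L_q(\mu))$ have distinct top terms $X^\mu$ and hence are linearly independent; so $\mathrm{ch}$ is injective as well.

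The key lemma is that $\mathrm{ch}$ intertwines the operator ``$s_{\lambda_1^*}\cdot{}$'' of Proposition~\ref{KXaction} with multiplication by $\mathrm{char}(\Delta(\lambda_1)^{Fr})$, i.e. $\mathrm{ch}(s_{\lambda_1^*}\cdot x)=\mathrm{char}(\Delta(\lambda_1)^{Fr})\cdot\mathrm{ch}(x)$; by linearity it suffices to check this on $x=\vert\mu\rangle$ with $\mu$ dominant. Using Proposition~\ref{KXaction} and the crystal formula for $s_{\lambda_1^*}$ in \eqref{Weylchdefn}, and unwinding the $*$-twists and the scaling by $\ell$ (recall $\Delta(\lambda_1)=L_{\mathring{\fg}}(\lambda_1)$ is irreducible, $L_{\mathring{\fg}}(\lambda_1^*)=L_{\mathring{\fg}}(\lambda_1)^*$, and $\Delta(\lambda_1)^{Fr}$ carries the weights of $\Delta(\lambda_1)$ rescaled by $\ell$), one gets
$$s_{\lambda_1^*}\cdot\vert\mu\rangle=\sum_{\delta}\bigl(\dim L_{\mathring{\fg}}(\lambda_1)_\delta\bigr)\,\vert\mu+\ell\delta\rangle,$$
the sum over the weights $\delta$ of $L_{\mathring{\fg}}(\lambda_1)$ with multiplicity, whence $\mathrm{ch}(s_{\lambda_1^*}\cdot\vert\mu\rangle)=\sum_\delta(\dim L_{\mathring{\fg}}(\lambda_1)_\delta)\,\chi(\mu+\ell\delta)$. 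On the other side, writing $J(f)=\sum_{w\in W_0}\det(w)\,wf$ so that $\chi(\xi)=J(X^{\xi+\rho})/J(X^\rho)$, and using that $\mathrm{char}(\Delta(\lambda_1)^{Fr})$ is $W_0$-invariant (the multiset of weights $\ell\delta$ is $W_0$-stable), one has $\mathrm{char}(\Delta(\lambda_1)^{Fr})\cdot\chi(\mu)=J\bigl(X^{\mu+\rho}\,\mathrm{char}(\Delta(\lambda_1)^{Fr})\bigr)/J(X^\rho)=\sum_\delta(\dim L_{\mathring{\fg}}(\lambda_1)_\delta)\,\chi(\mu+\ell\delta)$, the same expression. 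Reconciling the dual $\lambda_1^*$ appearing in Theorem~\ref{TenPrdThm} with the honest Frobenius twist $\Delta(\lambda_1)^{Fr}$ is the step I expect to demand the most care; the underlying computation is just a manipulation of the $J$-operator.

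With the lemma in hand the theorem follows quickly. Since $\mathrm{char}(\Delta(\lambda_1)^{Fr}\otimes L_q(\lambda_0))=\mathrm{char}(\Delta(\lambda_1)^{Fr})\cdot\mathrm{char}(L_q(\lambda_0))$ and $\mathrm{char}(L_q(\lambda_0))=\mathrm{ch}(C_{\lambda_0})$, the lemma together with Theorem~\ref{TenPrdThm} gives
$$\mathrm{char}(\Delta(\lambda_1)^{Fr}\otimes L_q(\lambda_0))=\mathrm{ch}(s_{\lambda_1^*}\cdot C_{\lambda_0})=\mathrm{ch}(C_\lambda)=\mathrm{char}(L_q(\lambda)).$$
Because the characters of the simple $U_q\mathring{\fg}$-modules are linearly independent and the character of a finite dimensional module is the sum (with positive multiplicities) of the characters of its composition factors, this equality forces $\Delta(\lambda_1)^{Fr}\otimes L_q(\lambda_0)$ to have $L_q(\lambda)$ as its only composition factor, occurring once. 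Hence $\Delta(\lambda_1)^{Fr}\otimes L_q(\lambda_0)$ has composition length one, so it is simple and isomorphic to $L_q(\lambda)$, which is the assertion of Theorem~\ref{SLThm}. (A statement at the level of the graded Grothendieck group is then automatic, being a consequence of an isomorphism of modules.)
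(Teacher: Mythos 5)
Your derivation is correct, but note that the paper does not actually prove Theorem \ref{SLThm}: it cites \cite[Theorem 7.4]{Lu89} and, immediately after the statement, merely remarks that \emph{accepting} Theorem \ref{qgrouptoFockspace} the result is equivalent to the Fock-space product theorem, Theorem \ref{TenPrdThm}. You have carried out the implication the paper only gestures at, and the steps check out: the specialization $t^{\frac12}=1$ and the verification that $\vert\xi\rangle\mapsto\chi(\xi)$ annihilates all three straightening relations; the computation $s_{\lambda_1^*}\cdot\vert\mu\rangle=\sum_\delta(\dim L_{\mathring{\fg}}(\lambda_1)_\delta)\vert\mu+\ell\delta\rangle$, where the two dualities cancel correctly because the weight multiset of $L_{\mathring{\fg}}(\lambda_1^*)^*$ is that of $L_{\mathring{\fg}}(\lambda_1)$ and is $W_0$-stable; the $J$-operator identity giving the intertwining with multiplication by $\mathrm{char}(\Delta(\lambda_1)^{Fr})$; and the upgrade from a character identity to a module isomorphism via linear independence of the $\mathrm{char}(L_q(\mu))$ and positivity of composition multiplicities. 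What your route buys, compared with the paper's citation, is an actual alternative proof in the spirit the introduction advertises; what it costs is that everything is conditional on Theorem \ref{qgrouptoFockspace}, itself a deep input resting on \cite{KL94} and \cite{KT95} --- exactly the paper's ``hardly elementary'' caveat. The one point you should state explicitly is non-circularity: your argument proves the Steinberg--Lusztig theorem only if the proof of Theorem \ref{qgrouptoFockspace} in \cite{LRS} does not already invoke it.
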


\noindent
Accepting Theorem \ref{qgrouptoFockspace}, Theorem \ref{SLThm} is equivalent to the product theorem for abstract Fock space,
Theorem \ref{TenPrdThm}.

\subsection{LLT polynomials for general Lie type}

In \cite{LLT} and \cite[(43)]{LT} and \cite[Definition 6.6]{GH}, 
the LLT polynomials for type A are defined by
\begin{equation}
G^{(\ell)}_{\mu/\nu}(x, t^{-1}) 
\sum_{T\in SSRT_\ell(\mu/\nu)} t^{-\mathrm{spin}(T)}x^T,
\label{LLTdefn}
\end{equation}
where $SSRT_\ell(\mu/\nu)$ is the set of semistandard $\ell$ ribbon tableaux of
shape $\mu/\nu$, $\mathrm{spin}(T)$ is the spin of the tableaux $T$ and 
$X^T$ is the weight of the tableaux $T$ (see \cite[\S6.5]{GH} for an efficient review of the
combinatorial definitions of semistandard ribbon tableaux, spin and weight).

In Lecouvey \cite{Lcy}, there is a definition of LLT polynomials for general Lie type 
generalizing the definition of \cite{LLT} from type A which proceeds as follows.
Define a $\KK$-algebra homomorphism
$$
\begin{matrix}
\psi_\ell\colon &\KK[X] &\longrightarrow &\KK[X] \\
&X^\mu &\longmapsto &X^{\ell\mu}
\end{matrix}
\quad\hbox{so that}\quad
\psi_\ell(s_\lambda) = \mathrm{char}(\Delta_q(\lambda)^{Fr}),
$$
in the framework of Theorem \ref{SLThm}.  Then \cite[(57)]{Lcy} defines
\begin{equation}
G_\mu^\ell
= \sum_{\lambda\in (\fa_\ZZ^*)^+} p_{\ell\lambda, \mu}  s_\lambda,
\label{Lecouveydefn}
\end{equation}
where $p_{\ell\lambda,\mu}\in \ZZ[t^{\frac12}]$ are as in \eqref{Clambdadefn}.  
As pointed out in \cite[Cor.\ 5.1.3]{Lcy},  Theorem \ref{qgrouptoFockspace} gives
\begin{equation*}
\psi_\ell(s_\lambda)
= \mathrm{char}(\Delta(\lambda)^{Fr}) 
= \mathrm{char}(L_q(\ell\lambda)) 
= \sum_{\mu\in (\fa_\ZZ^*)^+} p_{\ell\lambda,\mu}(1) \mathrm{char}(\Delta_q(\mu))
= \sum_{\mu\in (\fa_\ZZ^*)^+} p_{\ell\lambda,\mu}(1) s_\mu.
\end{equation*}
As explained carefully in 
\cite[Theorem 4.8(b)]{LRS}, the polynomials $p_{\ell\lambda,\mu}$ are
parabolic singular Kazhdan-Lusztig polynomials.

In \cite[Definition 5.12 and Corollary 6.4]{GH} there is another definition of LLT polynomials for general 
Lie type:
\begin{equation}
\cL^G_{L, \beta,\gamma}
= t^{l_{\beta-\gamma}+\ell(w)-\ell(v)}
\sum_{\lambda\in (\fa_\ZZ^*)^+} Q_{\mu\nu}^\lambda s_\lambda,
\quad\hbox{where 
}\quad
s_{\lambda^*} \cdot \vert\nu\rangle = \sum_{\mu} Q_{\mu\nu}^\lambda \vert\mu\rangle
\label{GHdefn}
\end{equation}
determine the polynomials $Q_{\nu\mu}^\lambda$.  Here $G$ is the 
reductive algebraic group alluded to in \eqref{wtsWeylgpdefn}, 
$L$ is a Levi subgroup of $G$ with Weyl group $W_\nu$,
$l_{\beta-\gamma}$ is the nonnegative integer defined in \cite[Remark 5.10]{GH},
and 
$$
\mu=v\circ (\eta+\ell\beta) \quad\hbox{and}\quad
\nu = w\circ (\eta+k\gamma),
\quad\hbox{where}\quad
v\in W_0 t_{\beta} W_\eta
\quad\hbox{and}\quad 
w\in W_0 t_\gamma W_\eta
$$
are minimal representatives.  At this point, the reader's discomfort
occurring from the transitions between $\beta$ and $\gamma$ and $v$ and $w$ 
and $\mu$ and $\nu$ is mitigated by recognizing that the relation between these two 
definitions occurs in the special case $\nu=0$: Theorem \ref{TenPrdThm}
and \eqref{KLisket} and the definition of $Q_{\mu\nu}^\lambda$ in \eqref{GHdefn} give
$$C_{\ell\lambda} = s_{\lambda^*}\cdot C_0
= s_{\lambda^*}\cdot \vert 0\rangle = \sum_\mu Q_{\mu0}^\lambda \vert \mu\rangle,
\quad\hbox{and comparing with \eqref{Clambdadefn} gives}\quad
p_{\ell\lambda,\mu} = Q_{\mu,0}^\lambda$$
and specifies the close relationship between $G_\mu^\ell$ and $\cL_{L, \beta, \gamma}^G$ 
which occurs at $\nu=0$.  They are the same up to a power of $t$.

\section{Tensor product theorem on affine Lie algebra representations}

Let $\mathring{\fg}$ be the Lie algebra of $G$ and let
$\fg = \mathring{\fg}\otimes_\CC \CC[\epsilon, \epsilon^{-1}] + \CC K +\CC d$
be the corresponding affine Kac-Moody Lie algebra (see \cite[\S6.2]{Kac} -- we follow the notation
of \cite[(3.17)]{LRS}).
Let $\ell\in \ZZ_{>0}$ and let $h$ be the dual Coxeter number.
As explained in \cite[Theorem 3.2]{LRS}, an important result of Kazhdan-Lusztig establishes a
relation between level $(-\ell-h)$-representations in parabolic 
category $\cO_{\mathring{\fg}}^\fg$ for the affine Lie algebra 
and the finite dimensional representations
of the quantum group $U_q\mathring{\fg}$ with $q^{2\ell}=1$.

Let
$$\fg' = [\fg, \fg] = \mathring{\fg}\otimes_\CC \CC[\epsilon,\epsilon^{-1}] + \CC K.$$
By restriction, the modules in $\cO_{\mathring{\fg}}^\fg$ are $\fg'$-modules.  Let $\Lambda_0$ be
the fundamental weight of the affine Lie algebra so that $L(c\Lambda_0+\lambda)$ is an
irreducible highest weight $\fg$-module of level $c$ (i.e. $K$ acts by the constant $c$).

\begin{thm} \label{negleveltoquantum}
\cite[Theorem 38.1]{KL94}
There is an equivalence of categories
\begin{equation*}
\begin{matrix}
\left\{
\begin{matrix}
\hbox{finite length $\fg'$-modules} \\
\hbox{of level $-\ell-h$ in $\cO_{\mathring{\fg}}^\fg$} 
\end{matrix}
\right\}
&\stackrel{\Psi_1}{\longrightarrow}
&\left\{
\begin{matrix}
\hbox{finite dimensional $U_q(\mathring{\fg})$-modules}\\
\hbox{with $q^{2\ell}=1$}
\end{matrix}
\right\}
\\
\Delta_{\mathring{\fg}}^\fg((-\ell-h)\Lambda_0+\lambda)
&\longmapsto
&\Delta_q(\lambda) \\
L((-\ell-h)\Lambda_0+\lambda)
&\longmapsto
&L_q(\lambda)
\end{matrix}
\end{equation*}
\end{thm}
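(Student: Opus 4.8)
The statement is the Kazhdan--Lusztig equivalence between (parabolic) category $\cO$ for the affine Lie algebra at negative level and finite dimensional representations of the quantum group at a root of unity, and the plan is not to give an independent argument — the proof is the content of the series \cite{KL94} — but to recall its architecture. The organizing principle is that neither side is merely an abelian category: each carries a braided tensor structure, and the functor $\Psi_1$ is an equivalence of braided tensor categories, rigid enough to force compatibility with the highest weight structures and hence with the Weyl and simple modules.

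First I would fix the category: $\fg$-modules that are finitely generated, $\mathring{\fg}$-integrable (so that restriction to $\fg'$ lands in $\cO_{\mathring{\fg}}^\fg$), of finite length, and on which $K$ acts by $-\ell-h$. This is a highest weight category whose standard objects are the parabolic Verma modules $\Delta_{\mathring{\fg}}^\fg((-\ell-h)\Lambda_0+\lambda)$ and whose simple objects are their heads $L((-\ell-h)\Lambda_0+\lambda)$, indexed by $\lambda\in(\fa_\ZZ^*)^+$. The next and central step is Drinfeld's fusion tensor product: given two such modules one forms a third via spaces of coinvariants (conformal blocks) attached to $\mathbb{P}^1$ with three marked points; because the level $-\ell-h$ is away from the critical level $-h$, this operation is well defined, exact in each variable, and preserves finite length. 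Associativity and commutativity constraints are produced by moving the marked points on $\mathbb{P}^1$ and are controlled by the flatness and the monodromy of the Knizhnik--Zamolodchikov connection on the configuration space.

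On the quantum side, finite dimensional $U_q(\mathring{\fg})$-modules with $q^{2\ell}=1$ form a braided tensor category via the universal $R$-matrix, for $q$ the $2\ell$-th root of unity attached to the level by $q = e^{\pi i/(-\ell)}$. The comparison of the two braidings is the Drinfeld--Kohno theorem: the asymptotics and monodromy of the KZ equations reproduce the quantum group $R$-matrix, and establishing this identification of braided tensor categories is the heart of \cite{KL94}. Finally, both categories are highest weight categories with standard objects labeled by the same poset, and \cite{KL94} shows the equivalence carries parabolic Verma modules to quantum Weyl modules, $\Delta_{\mathring{\fg}}^\fg((-\ell-h)\Lambda_0+\lambda)\mapsto\Delta_q(\lambda)$; since an equivalence of abelian categories is exact and preserves simple quotients, it then sends $L((-\ell-h)\Lambda_0+\lambda)\mapsto L_q(\lambda)$, which is the assertion of the theorem.

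The main obstacle — and the reason I would invoke \cite{KL94} rather than reprove it — lies entirely in the middle steps: constructing the fusion tensor product on category $\cO$ at negative level, and pushing the analysis of the KZ equations (convergence and analytic continuation of solutions, precise normalization of the monodromy) far enough to match the braiding with that of $U_q(\mathring{\fg})$. This analytic and category-theoretic input is exactly the technical core of the Kazhdan--Lusztig program and cannot be reproduced briefly; everything downstream of it in the present paper — Theorem \ref{qgrouptoFockspace}, Theorem \ref{abstracttoHecke}, and the product theorem — takes this equivalence as a black box.
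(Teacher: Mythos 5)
Your proposal is correct and takes the same route as the paper: Theorem~\ref{negleveltoquantum} is stated as a citation of \cite[Theorem 38.1]{KL94} with no independent proof given, and both you and the paper treat the Kazhdan--Lusztig equivalence as a black box. Your sketch of the fusion product and Drinfeld--Kohno comparison is an accurate gloss of the cited proof's architecture; the only thing the paper adds that you omit is the remark that the statement as given is for the simply-laced case, with the non-simply-laced modifications referred to \cite[\S 8.4]{Lu94} and \cite{Lu95}.
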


\noindent
This statement of Theorem \ref{negleveltoquantum} is for the simply-laced (symmetric) case.  With the proper 
modifications to this statement the result holds for non-simply laced
cases as well, see \cite[\S 8.4]{Lu94} and \cite{Lu95}.

Let $\lambda\in \fa_\ZZ^*$.
Under the composition of the map $\Psi_1$ in Theorem \ref{negleveltoquantum} and
the map $\Psi_2$ from Theorem  \ref{qgrouptoFockspace},
$$\Psi_2(\Psi_1([L((-\ell-h)\Lambda_0+\ell\lambda)])) = \Psi_2([L_q(\ell\lambda)]) 
= C_{\ell\lambda} = \vert \ell\lambda \rangle.$$
Thus it follows from Theorem \ref{negleveltoquantum}, Theorem \ref{qgrouptoFockspace} and \eqref{KLisket} that
\begin{equation}
L((-\ell-h)\Lambda_0+\ell\lambda) = \Delta_{\mathring{\fg}}^\fg((-\ell-h)\Lambda_0 +\ell\lambda)
= \Ind_{\mathring{\fg_0}+\fb}^{\fg}(L_{\mathring{\fg}}(\ell\lambda))
\cong U\fg \otimes_{U\fk} L_{\mathring{\fg}}(\ell\lambda),
\label{affineFtwistmodule}
\end{equation}
where
$$
\fk = \bigoplus_{k\in \ZZ_{\ge 0}} \epsilon^k
\Big(\fa\oplus 
\bigoplus_{\alpha\in R^+} \mathring{\fg}_\alpha+\mathring{\fg}_{-\alpha}\Big)
\quad\hbox{with $R^+$ the set of positive roots of $\mathring{\fg}$.}
$$
As given in \eqref{Weylchdefn}, the Weyl character formula for the 
$\mathring{\fg}$-module $L_{\mathring{\fg}}(\ell\lambda)$ is
\begin{equation}
\mathrm{char}(L_{\mathring{\fg}}(\ell\lambda)) = s_{\ell\lambda}
= \left(\prod_{\alpha\in R^+}\frac{1}{1-X^{-\alpha}}\right)
\cdot 
\sum_{w\in W_0} \det(w)X^{w\circ \ell\lambda}.
\end{equation}
Letting $q = e^\delta$ and using the Poincar\'e-Birkhoff-Witt theorem,
the character of the $\fg$-module in \eqref{affineFtwistmodule} is
\begin{align}
&\mathrm{char}(L((-\ell-h)\Lambda_0+\ell\lambda)) 
=\mathrm{char}(\Delta_{\mathring{\fg}}^\fg((-\ell-h)\Lambda_0 +\ell\lambda))
= \mathrm{char}(U\fg \otimes_{U\fk} L_{\mathring{\fg}}(\lambda)) 
\nonumber \\
&= 
s_{\ell\lambda} 
\prod_{k\in \ZZ_{>0}} \left( \frac{1}{(1-q^{-k})^n}
\prod_{\alpha\in R^+} \frac{1}{1-q^{-k}X^\alpha}
\cdot \frac{1}{1-q^{-k}X^{-\alpha}}\right)
 \\
&=
\left(\prod_{k\in \ZZ_{>0}} \frac{1}{(1-q^{-k})^n}\right)
\left(\prod_{k\in \ZZ_{>0}} \prod_{\alpha\in R^+} \frac{1}{1-q^{-k}X^\alpha}\right)
\left(\prod_{k\in \ZZ_{\ge 0}} \prod_{\alpha\in R^+} \frac{1}{1-q^{-k}X^{-\alpha}}\right)
\left(\sum_{w\in W_0} \det(w) X^{w\circ \ell\lambda}\right).
\nonumber 
\end{align}
This formula is reminiscent of Weyl-Kac character formula for integrable representations, 
but we have not yet found a reference for it in the literature.  
As we have explained in \eqref{affineFtwistmodule}, this formula is an easy consequence of \cite{KL94} and \cite{Lu89}.

The equivalence in Theorem \ref{negleveltoquantum} is an equivalence of monoidal categories where
the product on the left hand side is the fusion tensor product $\hat\otimes$
and the product on right hand side is the tensor product coming from the Hopf algebra 
structure of $U_q\mathring{\fg}$.
Thus, in terms of affine Lie algebra representations, the Lusztig-Steinberg tensor product theorem
says that
$$\hbox{if $\lambda\in (\fa_\ZZ^*)^+$ \quad and \quad
$\lambda = \lambda_0+\ell\lambda_1$ with $\lambda_0\in \Pi_\ell$}$$
where $\Pi_\ell$ is as in \eqref{Pielldefn}, then
\begin{align}
L((-\ell-h)\Lambda_0+\lambda)
&\cong L((-\ell-h)\Lambda_0+\lambda_0)\hat\otimes L((-\ell-h)\Lambda_0+\ell\lambda_1)
\nonumber \\
&\cong L((-\ell-h)\Lambda_0+\lambda_0)\hat\otimes 
\Delta_{\mathring{\fg}}^{\fg}((-\ell-h)\lambda_0+\ell\lambda_1).
\end{align}

%


\begin{thebibliography}{20}




%
%


\bibitem[Bou]{Bou}N. Bourbaki, {\em Groupes et alg\`ebres de Lie}, vol. 4--6, Masson 1981, MR0647314 

\bibitem[BBF]{BBF} B.\ Brubaker, D.\ Bump and S.\ Friedberg, \emph{Matrix coefficients and Iwahori-Hecke algebra
modules}, Advances in Math.\ \textbf{299} (2016) 247-271 doi:10.1016/j.aim.2016.05.012, arXiv:1507.07572, MR3519469




\bibitem[CS]{CS} W.\ Casselman and J.\ Shalika, \emph{The unramified prinicipal series of $p$-adic groups II: The Whittaker function}, Compositio Math.\ \textbf{41} (1980) 207-231, MR0581582.

\bibitem[CP]{CP} V. Chari, A. Pressley, {\em A guide to quantum groups}, Cambridge University Press, Cambridge 1994, MR1358358.


%



%
%
%

\bibitem[FGV]{FGV} E.\ Frenkel, D.\ Gaitsgory and K.\ Vilonen,
\emph{Whittaker patterns in the geometry of moduli spaces of bundles on curves},
Ann.\ Math.\ \textbf{153} (2001), 699-748, arXiv:math.AG/9907133, MR 
MR1836286.


%
%

\bibitem[GH]{GH} I.\ Grojnowski and M.\ Haiman, {\em Affine Hecke algebras and positivity of LLT and Macdonald polynomials}, 2007, available from http://math.berkeley.edu/$\sim$haiman

\bibitem[Gu]{Gu} J.\ Guilhot, \emph{Admissible subsets and Littelmann paths in affine Kazhdan-Lusztig theory}, arXiv:1606.05542.

\bibitem[HKP]{HKP} T.\ Haines, R.\ Kottwitz and A.\ Prasad, \emph{Iwahori-Hecke algebras}, 
J.\ Ramanujan Math.\ Soc.\ \textbf{25} (2010) 113-145, arXiv:math/0309168, MR2642451. 

%


\bibitem[JM]{JM} G.\ James and A.\ Mathas, \emph{A q-analogue of the Jantzen-Schaper theorem},
Proc.\ London Math.\ Soc. (3) \textbf{74} (1997) 241--274, MR1425323.

\bibitem[Kac]{Kac} V. Kac, {\em Infinite dimensional Lie algebras}, Third edition. Cambridge University Press, Cambridge, 1990. xxii+400 pp. ISBN: 0-521-37215-1; 0-521-46693-8, MR1104219

\bibitem[KMS]{KMS} M.\ Kashiwara, T.\ Miwa and E.\ Stern, \emph{Decomposition of $q$-deformed Fock spaces}, arxiv:q-alg/9508006, Selecta Math.\ \textbf{1} (1996) 787--805, MR1383585.

\bibitem[KT95]{KT95}M. Kashiwara, T. Tanisaki, {\em Kazhdan-Lusztig conjecture for affine Lie algebras with negative level},  Duke Math. J. {\bf 77} (1995)  21--62, MR1317626.

%
%
%


\bibitem[KL94]{KL94} D.\ Kazhdan and G.\ Lusztig, \emph{Tensor structures arising from affine Lie algebras I, II, III and IV}, 
J.\ Amer.\ Math.\ Soc.\ \textbf{6} (1993) 905--947 and 949--1011 MR1186962,
J.\ Amer.\ Math.\ Soc.\ \textbf{7} (1994) 335--381 MR1239506, and
J.\ Amer.\ Math.\ Soc.\ \textbf{7} (1994) 383--453, MR1239507 



\bibitem[Knp]{Knp} F.\ Knop,
\emph{On the Kazhdan-Lusztig basis of a spherical Hecke algebra}
Represent.\ Theory \textbf{9} (2005) 417?425,  arXiv:math/0403066, MR2142817.

\bibitem[LRS]{LRS} M.\ Lanini, A.\ Ram and P.\ Sobaje, \emph{Fock space model for decomposition numbers for quantum groups at roots of unity}, to appear in Kyoto Math.\ J., arXiv:1612.03120.



\bibitem[LLT]{LLT} A.\ Lascoux, B.\ Leclerc, J-Y.\ Thibon,  \emph{Ribbon tableaux, Hall-Littlewood functions, quantum affine algebras and unipotent varieties}, J.\ Math.\ Phys.\ \textbf{38} (1997) 1041-1068, arXiv:q-alg/9512031, MR1399754.





\bibitem[LT]{LT} B.\ Leclerc, J-Y.\ Thibon, {\em Littlewood-Richardson coefficients and Kazhdan-Lusztig polynomials}, Combinatorial methods in representation theory (Kyoto, 1998), 155--220,  
Adv. Stud. Pure Math., {\bf 28}, Kinokuniya, Tokyo, 2000,  arXiv:math/9809122, MR1864481.

\bibitem[Lcy]{Lcy} C.\ Lecouvey, \emph{Parabolic Kazhdan-Lusztig polynomials, plethysms and 
generalised Hall-Littlewood functions for classical types}, European J.\ Combinatorics  \textbf{30} (2009) 157-191, arXiv:math.RT/0607038, MR2460224.

\bibitem[Li]{Li} P.\ Littelmann, \emph{Paths and root operators in representation theory},
Ann.\ Math.\ \textbf{142} (1995) 499-525, MR1356780.





\bibitem[Lu89]{Lu89}G. Lusztig, {\em Modular representations and quantum groups}, Contemp. Math. {\bf 82} (1989) 58--77, MR0982278.

%
%
\bibitem[Lu94]{Lu94} G. Lusztig, {\em Monodromic systems on affine flag manifolds}, Proc. R. Soc. Lond. A {\bf 445} (1994) 231--246, MR1276910.

\bibitem[Lu95]{Lu95} G.\ Lusztig, \emph{Errata: ``Monodromic systems on affine flag manifolds'' [Proc. Roy. Soc. London Ser. A 445 (1994), no. 1923, 231--246; MR1276910]}, Proc.\ Roy.\ Soc.\ London Ser.\ A \textbf{450} (1995) 731--732, MR2105507.

%

\bibitem[NR]{NR} K. Nelsen and A. Ram, {\em Kostka-Foulkes polynomials and Macdonald spherical functions},  Surveys in combinatorics, 2003 (Bangor), 325--370,  London Math. Soc. Lecture Note Ser.,  {\bf 307}, Cambridge Univ. Press, Cambridge 2003, arXiv:0401298, MR2011741.

\bibitem[NP]{NP} B.C.\ Ngo and P.\ Polo, \emph{R\'esolutions de Demazure affines et formule de Casselman-Shalika g\'eom\'etrique}, J.\ Algebraic Geometry \textbf{10} (2001) 515-547,  math.AG/0005022, MR1832331.


%

\bibitem[Ra]{Ra} A.\ Ram, \emph{ Alcove walks, Hecke algebras, Spherical functions, crystals and column strict tableaux}, Pure and Applied Mathematics Quarterly  
(Special Issue: In honor of Robert MacPherson, Part 2 of 3) \textbf{2} no.\ 4 
(2006) 963-1013, arXiv:0601.343, MR2282411.

%
%
\bibitem[Sh]{Sh} P.\ Shan, \emph{Graded decomposition matrices of $v$-Schur algebras via Jantzen filtration}, Representation Theory \textbf{16} (2010) 212--269, arXiv:1006.1545, MR2915315.

%
%
%
%
%
%
%
%
%
\end{thebibliography}
\end{document}